 \newtheorem{thm}{Theorem}[section]
 \newtheorem{prop}[thm]{Proposition}
 \theoremstyle{definition}
 \newtheorem{defn}[thm]{Definition}
 \theoremstyle{remark}
 \newtheorem{rem}[thm]{Remark}
 \numberwithin{equation}{section}
\begin{document}

%-------------------------------------------------------------------------
% editorial commands: to be inserted by the editorial office
%
%\firstpage{1} \volume{228} \Copyrightyear{2004} \DOI{003-0001}
%
%
%\seriesextra{Just an add-on}
%\seriesextraline{This is the Concrete Title of this Book\br H.E. R and S.T.C. W, Eds.}
%
% for journals:
%
%\firstpage{1}
%\issuenumber{1}
%\Volumeandyear{1 (2004)}
%\Copyrightyear{2004}
%\DOI{003-xxxx-y}
%\Signet
%\commby{inhouse}
%\submitted{March 14, 2003}
%\received{March 16, 2000}
%\revised{June 1, 2000}
%\accepted{July 22, 2000}
%
%
%
%---------------------------------------------------------------------------
%Insert here the title, affiliations and abstract:
%

\title{The Graf product: a Clifford structure  framework   on the exterior bundle}

%----------Author 1
\author{R. Lopes}

\address{Centro de Matem\'atica, Computa\c c\~ao e Cogni\c c\~ao\br Universidade Federal do ABC - UFABC\br  09210-580, Santo Andr\'e\br Brazil}

\email{rian.lopes@ufabc.edu.br}

%\thanks{This work was completed with the support of our
%\TeX-pert.}
%----------Author 2
\author{R. da Rocha}
\address{Centro de Matem\'atica, Computa\c c\~ao e Cogni\c c\~ao\br Universidade Federal do ABC - UFABC\br  09210-580, Santo Andr\'e\br Brazil}
\email{roldao.rocha@ufabc.edu.br}
%----------classification, keywords, date
%\subjclass{Primary 99Z99; Secondary 00A00}

\keywords{Exterior bundle, Graf product, Graf--Clifford algebra, truncated subalgebra.}

\date{January 1, 2004}
%----------additions

\newcommand{\blkdiam}{\tikz\node[rectangle, draw, yscale=1, scale=.47, rotate=45, fill=black] {};}

\newcommand{\cliff}{{\mathcal{C}\ell}_{p, q}}

\newcommand{\M}{\mathcal{M}}

\newcommand{\tangb}{T^{*}\mathcal{M}}

\newcommand{\tanfib}{T^{*}_x\mathcal{M}}

\newcommand{\extbund}{{\bigwedge}(T^{*}\mathcal{M})}

\newcommand{\extevbund}{{\bigwedge}^{+}(T^{*}\mathcal{M})}		

\newcommand{\extfib}{{\bigwedge}(T^{*}_x \mathcal{M})}	

\newcommand{\extfibk}{{\bigwedge^k}(T^{*}_x \mathcal{M})}			

\newcommand{\extevfib}{{\bigwedge}^+(T^{*}_x \mathcal{M})}	

\newcommand{\clifbund}{{\mathcal{C}\ell}(T^{*}\mathcal{M})}	

\newcommand{\clifevbund}{{\mathcal{C}\ell}^+(T^{*}\mathcal{M})}	

\newcommand{\cliffib}{{\mathcal{C}\ell}(T^{*}_x \mathcal{M}, g_x)}	

\newcommand{\secextabrev}{\varGamma({\bigwedge})}

\newcommand{\secextkabrev}{\varGamma\left({\bigwedge}^k\right)}

\newcommand{\secutanb}{\varGamma(U, T^{*}\mathcal{M})}

\newcommand{\secMtanb}{\varGamma(\M, T^{*}\mathcal{M})}

\newcommand{\secuclif}{\varGamma(U, {\mathcal{C}\ell}(T^{*}\mathcal{M}))}

\newcommand{\secMclif}{\varGamma(\M, {\mathcal{C}\ell}(T^{*}\mathcal{M}))}

\newcommand{\secMext}{\varGamma(\M, {\bigwedge} (T^{*}\mathcal{M}))}

\newcommand{\secext}{\varGamma(U, {\bigwedge} (T^{*}\mathcal{M}))}

\newcommand{\secextk}{\varGamma(U, {\bigwedge}^k (T^{*}\mathcal{M}))}

\newcommand{\unsecex}{1_{\varGamma}}	

\newcommand{\secp}{\varGamma(\mathcal{P})}

\newcommand{\secs}{\varGamma(\mathcal{S})}
%%% ----------------------------------------------------------------------

\begin{abstract}
\noindent The geometric product, defined by Graf on the space of differential forms,  endows the sections of the exterior bundle by a structure that is necessary to construct a Clifford algebra. The Graf product is introduced and revisited with a suitable underlying  framework that naturally encompasses a coframe in the cotangent bundle, besides the volume element centrality, the Hodge operator and the so called truncated subalgebra as well.
\end{abstract}

%%% ----------------------------------------------------------------------
\maketitle
%%% ----------------------------------------------------------------------
%\tableofcontents
\section{Introduction}

The origins of Clifford algebras reside on the works by Clifford himself \cite{c1}, where he  introduced a quaternionic setup, employing Hamilton's quaternions, to the Grassmann's 
 theory of extensions \cite{grassmann}, deriving a framework that carries the orthogonal geometry
of an arbitrary vector space.  Thereafter, Lipschitz derived representations of rotations, implemented by complex fields, quaternionic algebras, and their higher-dimensional counterparts, together with the  Clifford (geometric) algebra and the Spin group as well. Cartan introduced representations of the Clifford algebras and the the periodicity theorem \cite{cartan}, besides the concept of pure spinors. Witt implemented Clifford algebras, studying quadratic forms constructed over arbitrary fields that have characteristic not equal to 2. The case of characteristic 2 was implemented by Chevalley \cite{Chevalley} and Riesz \cite{Riesz}, who introduced the isomorphism between Clifford and exterior algebras.

On 1962, K\"ahler introduced a new geometric product acting on exterior differential forms \cite{kahler}. This new product equipping the Grassmann exterior algebra makes it to be  isomorphic, as an associative algebra, to a Clifford algebra. This product was detailed by Graf in  Ref. \cite{Graf} and this new algebra have been named the K\"ahler-Atiyah algebra. The Clifford product that shall be presented in this text is a reformulation of the geometric product defined by Graf in 1978. Such framework was presented in Ref. \cite{cep} via the contracted wedge product. A posteriori, it has been further introduced in a manifold setup, in Ref. \cite{bab2}.

This paper constitutes a formal framework for the intrinsic algebraic structures onto which this product acts, as well as the Clifford algebra of forms. The desired product is defined, \textcolor{black}{on the} underlying sections $\varGamma({\bigwedge} (T^{*}\mathcal{M}))$ of the exterior bundle, making it into a Clifford algebra. Therefore, it is important to investigate the calculations of certain properties, with respect to   the formal Clifford product. In addition, this product provides the exterior algebra {\color{black}{with}} an interesting and effective Clifford structure, where the calculations depend only on the metric tensor and the contraction as well. 

This paper ir organized as follows: after fixing the notation and introducing fundamental algebraic and geometric features in Sect 2, in Sect. $3$ the Graf product $\diamond$ will be presented, by taking a coframe of the cotangent bundle, what makes  the algebra of differential forms to be a Clifford algebra  endowed with the Graf product. The volume element $\textbf{v}$ is then defined as the exterior product of all the elements of this coframe and therefore its centrality in $(\varGamma({\bigwedge} (T^{*}\mathcal{M})), \diamond)$ shall be proved. Besides, using the {\color{black}{Hodge operator}}, the product  $\textbf{v} \diamond \textbf{v}$ will be calculated, providing a splitting of $\varGamma({\bigwedge} (T^{*}\mathcal{M}))$. Lastly, we will study and detail the underlying and derived algebraic structures, the truncated Graf product and two prominent subalgebras of $\varGamma({\bigwedge} (T^{*}\mathcal{M}))$. \textcolor{black}{It is important to note that the main algebraic structure in this text is the set of sections of the exterior bundle, and all the Clifford structures will be considered on this set}.

\section{Preparation}

\qquad Let $(\M, g)$ be a paracompact, pseudo-Riemannian, connected manifold of signature $(p, q)$, with cotangent bundle $\pi : T^{*}\mathcal{M} \rightarrow \M$. The cotangent exterior bundle shall be denoted, as usual, by $\pi_1 : \bigwedge (T^{*}\mathcal{M}) \rightarrow \M$. All the Clifford structures will be considered in the sections of exterior bundle $\bigwedge (T^{*}\mathcal{M})$, therefore the structure of $\bigwedge (T^{*}\mathcal{M})$ shall be detailed. Firstly, the $k$-forms are defined as the sections of the $k$-power exterior bundle $\bigwedge^{k}(T^{*}\mathcal{M})$, for $k = 0, \ldots, \dim \mathcal{M} $, namely, a $k$-form \textcolor{black}{on an open set $U$ in $\M$} is an element of $\varGamma(U, \bigwedge^{k}(T^{*}\mathcal{M}))$ and a $k$-form on $\M$ is an element of $\varGamma(\M, \bigwedge^{k}(T^{*}\mathcal{M}))$. Naturally, the differential forms are defined in $\varGamma(\M, \bigwedge(T^{*} \mathcal{M}))$, being such set called in the literature \cite{extbun} as the exterior algebra of differential forms on $\mathcal{M}$. The constant function $\mathbbm{1} \in C^{\infty}(\M)$ is the unit element of $\varGamma(\M, \bigwedge(T^{*} \mathcal{M}))$, hereon denoted by $1_{\varGamma}$. It is worth to observe that a coframe to $ \bigwedge^k (T^{*} \mathcal{M})$ has $\binom{n}{k}$ elements, implying that a coframe of  $\bigwedge (T^{*} \mathcal{M})$ has $2^n$ elements, which are all the exterior products between the elements of a coframe in $\tangb$. 

For $ \dim\mathcal{M} = n$, let $\{ e_i\;|\; i \in I=\{i_1, \ldots, i_n\} \}$ be a local frame for {\color{black}{$T \mathcal{M}$}} at an open subset $U \subset \mathcal{M}$. The associated coframe for $T^{*} \mathcal{M}$ is given by the set of covector sections $\{e^i\;|\; i \in I\}$,  such that $e^i (e_j) = \delta^{i}_{j} \unsecex$. Eventually, the  convenient notations  $\varGamma(U, {\bigwedge}^k (T^{*}\mathcal{M})) = \varGamma({\bigwedge}^k)$ and $\varGamma(U, {\bigwedge} (T^{*}\mathcal{M})) = \varGamma({\bigwedge})$ shall be employed. 

The metric tensor $g:\varGamma(U, T \mathcal{M}) \times \varGamma(U, T \mathcal{M})\to\mathbb{R}$ acts on a pair $(e_i, e_j)$, whereas the metric tensor $g^{*}:\varGamma(U, T^{*} \mathcal{M}) \times \varGamma(U, T^{*} \mathcal{M})\to\mathbb{R}$ defines the reciprocal action, as
\begin{equation}
g(e_i, e_j) =: g_{ij} \in \mathbb{R} \ \text{and} \ g^{*}(e^i, e^j) =: g^{ij} \in \mathbb{R}.
\end{equation}
%The bilinear form $g^{*}$ on a pair $(\alpha, \beta)$ is defined by linearity of action on $(e^i, e^j)$ given by $g^{*}(\alpha, \beta) = \displaystyle{\sum_{i, j}} g^{ij} \alpha_i \beta_j$, where $\alpha = \displaystyle{\sum_{i}} \alpha_i e^i$ and $\beta = \displaystyle{\sum_{j}} \beta_j e^j$.
For an orthogonal coframe $\{e^1, \ldots, e^p, e^{p+1}, e^{p+2}, \ldots, e^n\}$, it holds $g^{ij}=0$, if $i\neq j$, {\color{black}{$g^{ii}=1$}}, if $i \in \{1, \ldots, p\}$, and {\color{black}{$g^{ii}= -1$}}, if $i \in \{p+1, \ldots, n\}$.

For a set of indexes $I_k = \{ i_1, \ldots, i_k\}$, the forms $e_{I_k} = e_{i_1 \ldots i_k} = e_{i_1} \wedge  e_{i_2} \wedge \ldots \wedge  e_{i_k}$ are defined in $\varGamma(U, \bigwedge (T \mathcal{M}))$ and $e^{I_k} = e^{i_1 \ldots i_k} = e^{i_1} \wedge  e^{i_2} \wedge \cdots \wedge  e^{i_k}$ in $\varGamma(U, \bigwedge (T^{*} \mathcal{M}))$ for $k = 1, \ldots, n$. Thus, a form $f \in \varGamma(U, \bigwedge (T^{*} \mathcal{M}))$ can be written at an open set $U$, with respect to the coframe, as:
\begin{equation}
f = \sum^{n}_{k=1} f_{I_k} e^{I_k},
\end{equation}	
where $f_{I_k}$ is a constant associated with the choices of $I_k$ in the coframe.

Naturally, the grade involution is the automorphism given by $\#(f) = \widehat{f} = \sum^{n}_{k=1} (-1)^{k} f_{I_k} e^{I_k}$ and the reversion is an anti-automorphism defined as $\mathtt{\sim}(f) = \widetilde{f} = \sum^{n}_{k=1} (-1)^{\frac{k(k-1)}{2}} f_{I_k} e^{I_k}$. 

Each fiber of exterior bundle is $\mathbb{Z}_2$-graded. \textcolor{black}{Besides, the $\mathbb{Z}_2$-grading is also well defined on $\bigwedge^k (T^{*} \mathcal{M})$,} then it is possible to consider the bundle splitting $\bigwedge (T^{*} \mathcal{M}) = \bigwedge^{+} (T^{*} \mathcal{M}) \oplus \bigwedge^{-} (T^{*} \mathcal{M})$, where $\bigwedge^{+} (T^{*} \mathcal{M})$ is the even subbundle in exterior bundle and it is constituted by even exterior algebras among the fibers. Hence, there is a induced $\mathbb{Z}_2$-graded in the sections of exterior bundle: 
\begin{eqnarray}
\varGamma^+ \left(\bigwedge\right) := \varGamma \left(U, {\bigwedge}^{+}(T^{*} \mathcal{M})\right) = \bigoplus_{k=\text{even}} \secextkabrev = \ker (\# - Id_{\secextabrev}),\\\varGamma^- \left(\bigwedge\right) := \varGamma \left(U, {\bigwedge}^{-}(T^{*} \mathcal{M}) \right) = \bigoplus_{k=\text{odd}} \secextkabrev = \ker (\# + Id_{\secextabrev}).
\end{eqnarray}

Hereon, the volume form shall be regarded as an element 
\begin{equation}\label{volu}
\textbf{v} = vol\left(\varGamma\left(U, {\bigwedge} (T^{*} \mathcal{M})\right)\right) = e^{12\ldots n} \in\varGamma\left(U, {\bigwedge}^{n} (T^{*} \mathcal{M})\right),
\end{equation}	
where $\{ e^{1}, \ldots, e^{n}\}$ is the local orthonormal coframe.

\begin{rem}
The Clifford bundle $\mathcal{C}\ell (T^{*}\mathcal{M})$ is defined as
\begin{equation}
\pi_2 :  \displaystyle{\bigsqcup_{x \in \mathcal{M}}} \mathcal{C}\ell (T_x^{*} \mathcal{M}, g_x) \rightarrow \M,
\end{equation}
where $g_x = {g\big|}_{T_x^{*} \mathcal{M}}$. \textcolor{black}{Since an orthonormal coframe has been considered, the transition functions of $\mathcal{C}\ell (T^{*}\mathcal{M})$, for open sets $U_i, U_j \subset \M$,  are
\begin{equation}
f_{ij}: U_i \cap U_j \rightarrow \mathsf{O}(n, \mathbb{R}).
\end{equation}
Note that a transition function of $T^{*}\mathcal{M}$ on $x \in U$ can be interpreted as an automorphism on the fiber $\mathcal{C}\ell (T^{*}_x\mathcal{M}, g_x)$, since $f_{ij}(x) \in \mathsf{O}(n, \mathbb{R})$. It means that the transition functions of $\mathcal{C}\ell (T^{*}\mathcal{M})$ are given by the transition functions of the cotangent bundle.}

The morphism $F$ between the Clifford bundle $\mathcal{C}\ell (T^{*}\mathcal{M})$ and the exterior bundle $\extbund$ is given by
\begin{equation}
\xymatrix{ \extbund \ar[dr]_{\pi_1} \ar[rr]^{F}  & & \mathcal{C}\ell (T^{*}\mathcal{M})  \ar[dl]^{\pi_2} \\  & \mathcal{M} &  }
\end{equation}
for projections $\pi_1$ and $\pi_2$, such that ${F\big|}_{\extfibk}$ is given by the Chevalley mapping
	\begin{equation}
\begin{array}{cccl}
& \extfibk & \rightarrow & \mathcal{C}\ell (T_x^{*} \mathcal{M}, g_x)  \\
&	D_1 \wedge \ldots \wedge D_k & \mapsto & \frac{1}{k!} \displaystyle{\sum_{\sigma \in S_k}} \operatorname{sgn}(\sigma) \ D_{\sigma(1)}  \ldots D_{\sigma(k)} 
\end{array}.
\end{equation}
Since such mapping is a linear isomorphism of vector spaces, then it is established the vector bundle isomorphism between $\extbund$ and $\mathcal{C}\ell (T^{*}\mathcal{M})$ from their fibers isomorphism.
\end{rem}

\section{An interesting non-named product}

\qquad This section is dedicated to formalize some results of the Clifford structures on the exterior bundle, for this the product defined by Graf will be chosen and it has now been rewritten via the contracted wedge product. \textcolor{black}{We also}, in this section, will study properties of this product, the volume element centrality, the Hodge operator in this setup and the truncated structure of the Clifford algebra of exterior bundle sections, as well as their subalgebras.

\begin{defn}
\upshape The \textit{contracted wedge product} of order $l$ between $f_1, f_2 \in \varGamma(U, {\bigwedge} (T^{*} \mathcal{M}))$ \cite{cep} is defined iteratively as:
\begin{eqnarray}
f_1 \wedge_0 f_2 &=& f_1 \wedge f_2\\
f_1 \wedge_l f_2 &=& \sum^{n}_{i, j = 1} g^{i j} (e_{i} \rfloor f_1) \wedge_{l-1} (e_{j} \rfloor f_2).
\end{eqnarray}
\end{defn}

The $\wedge_l$ product between a $r$-form $f_1$ and a $s$-form $f_2$ assumes the following possibilities: \\
1) $f_1 \wedge_l f_2 = 0$, if $l>r$ or $l>s$ or $l>n$; \\
2) $f_1 \wedge_l f_2$ can be nonzero, if $l \leq r$ and $l \leq s$. 

In the particular case, if $f_1$ is a $r$-form and $f_2$ is a $s$-form then 
$f_1 \wedge_1 f_2 = \sum^{n}_{i, j = 1} g^{i j} (e_{i} \rfloor f_1) \wedge (e_{j} \rfloor f_2)$ lies in $\varGamma(U, \bigwedge^{r-1+s-1}(T^{*} \mathcal{M}))$ and 
\begin{eqnarray} \nonumber
f_1 \wedge_2 f_2 &=& \sum^{n}_{a, b = 1} g^{a b} (e_{a} \rfloor f_1) \wedge_1 (e_{b} \rfloor f_2) \\ 
&=& \sum^{n}_{a, b = 1} g^{a b} \left(\sum^{n}_{i, j = 1} g^{i j} (e_{i} \rfloor e_{a} \rfloor f_1) \wedge (e_{j} \rfloor e_{b} \rfloor f_2)\right) 
\end{eqnarray}
is an element of $\varGamma(U, \bigwedge^{r-2+s-2}(T^{*} \mathcal{M}))$.

It is worth to emphasize that \textcolor{black}{the contraction $\rfloor$ maps $\varGamma(U, {\bigwedge}^{k} (T^{*} \mathcal{M}))$ on $\varGamma(U, {\bigwedge}^{k-1} (T^{*} \mathcal{M}))$}. Remembering that $e_i \rfloor e^{j} = e^j (e_i) = \delta^{j}_{i} \unsecex$, it shall be denoted \textcolor{black}{from now on} by $\delta^{j}_{i}$, for the sake of simplicity.

\begin{prop}\label{graf=ext}
If $f_1 = e^{I}$ is a $r$-form and $f_2 = e^{J}$ is a $s$-form such that $I \cap J= \emptyset$, then $f_1 \wedge_l f_2 = 0$ for all $l\geq 1$.
\end{prop}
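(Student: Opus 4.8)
The plan is to exploit two facts: the interior product $e_i \rfloor(\,\cdot\,)$ annihilates any basis form whose index set does not contain $i$, and, in the orthonormal coframe fixed throughout the paper, $g^{ij}=0$ whenever $i\neq j$. Together these force the defining double sum of $\wedge_l$ to vanish term by term already at its outermost level, with no need to unwind the full iteration.

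First I would specialise the one-step recursion to the two basis forms, writing, for $l\geq 1$,
$$e^{I}\wedge_l e^{J}=\sum_{i,j=1}^{n} g^{ij}\,(e_i\rfloor e^{I})\wedge_{l-1}(e_j\rfloor e^{J}).$$
Next I would record the support property of the contraction: since $\rfloor$ is an antiderivation with $e_i\rfloor e^{k}=\delta^{k}_{i}$, one gets $e_i\rfloor e^{I}=\pm\, e^{I\setminus\{i\}}$ when $i\in I$ and $e_i\rfloor e^{I}=0$ when $i\notin I$, and likewise for $e_j\rfloor e^{J}$. Consequently a term in the sum can be nonzero only if $i\in I$ and $j\in J$. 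Because $I\cap J=\emptyset$ by hypothesis, this forces $i\neq j$, whence $g^{ij}=0$ by orthonormality. Thus every surviving term carries a vanishing metric coefficient and the whole sum is zero, simultaneously for all $l\geq 1$.

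The subtle point to get right is the uniformity in $l$. I would deliberately avoid setting up an induction on $l$, since its base case $l=1$ involves $\wedge_0=\wedge$, which does \emph{not} vanish on disjoint index sets (indeed $e^{I}\wedge e^{J}=e^{I\cup J}\neq 0$). The correct observation is that the very first, outermost contraction already decouples $I$ and $J$, so the nested product $\wedge_{l-1}$ never needs to be evaluated at all. Equivalently, I could first collapse the diagonal metric to $e^{I}\wedge_l e^{J}=\sum_{i} g^{ii}\,(e_i\rfloor e^{I})\wedge_{l-1}(e_i\rfloor e^{J})$, after which a surviving term would require the single index $i$ to lie in $I\cap J=\emptyset$, an immediate contradiction.

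Finally, I would remark that orthonormality is genuinely used and is not cosmetic: for a non-orthogonal coframe the choice $f_1=e^{1}$, $f_2=e^{2}$ (so $I=\{1\}$, $J=\{2\}$ are disjoint) gives $e^{1}\wedge_1 e^{2}=g^{12}$, which can be nonzero. Hence disjointness of $I$ and $J$ alone does not suffice, and the statement should be read within the orthonormal coframe adopted in Section~2.
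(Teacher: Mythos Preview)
Your argument is correct and rests on the same core observation as the paper: a term $(e_i\rfloor e^I)\wedge_{l-1}(e_j\rfloor e^J)$ can survive only if $i\in I$ and $j\in J$, and disjointness of $I,J$ then forces $i\neq j$, whence $g^{ij}=0$ in the orthonormal coframe. The paper, however, organizes this as an induction on $l$: it proves $l=1$ by exactly your computation and then argues that $e_i\rfloor f_1$ and $e_j\rfloor f_2$ again have disjoint index supports, so the inductive hypothesis applies. Your observation that the outermost step already kills every term for \emph{all} $l\geq 1$ is a genuine streamlining---the induction is correct but unnecessary. One small clarification: your stated reason for avoiding induction (that the base case ``involves $\wedge_0$, which does not vanish'') is a red herring; the base case is the \emph{statement} $e^I\wedge_1 e^J=0$, and although its proof unfolds $\wedge_0$, the same metric-coefficient argument disposes of it. Induction is not problematic here, merely redundant. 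Your closing remark that orthonormality is essential matches the paper, which opens its proof by fixing an orthonormal coframe.
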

\begin{proof}
Consider an orthonormal coframe $e^I$ to $\varGamma(U, {\bigwedge} (T^{*} \mathcal{M}))$, a $r$-form $f_1 = e^{I}$ and a $s$-form $f_2 = e^{J}$,  where the sets $I=\{i_1, \ldots, i_r\}$, $J=\{j_1, \ldots, j_s\}$ are taken such that $I \cap J= \emptyset$, then
\begin{eqnarray}\label{zero}
f_1 \wedge_1 f_2\! \!\!\!&\!\!\!\!=\!\!\!&\!\!\!\!\!\! \sum^{n}_{i, j = 1} g^{i j} (e_{i} \rfloor f_1) \wedge (e_{j} \rfloor f_2) \nonumber \\
&=& \sum^{n}_{i, j = 1} g^{i j} (e_{i} \rfloor e^{i_1 \ldots i_r}) \wedge (e_{j} \rfloor e^{j_1 \ldots j_s})  \\
&=&\!\sum^{n}_{i, j = 1}\!g^{i j}\!(\delta_{i}^{i_1}\!e^{i_2\ldots i_r}\!-\!e^{i_1}\!\wedge\!(e_i \rfloor e^{i_2\ldots i_r}))\!\wedge\!(\delta_{j}^{j_1}\!e^{j_2\ldots j_s}\!-\!e^{j_1}\!\wedge\!(e_j \rfloor e^{j_2\ldots j_s})). \nonumber
\end{eqnarray}
In Eqs. \eqref{zero}, continuing the calculations of contractions, the values of $i$ are taken according the values for $\{i_1, \ldots, i_r\}$, such that $\delta_{i}^{i_k}=1$, $k=1, \ldots, r$. Analogously, the values of $j$ are taken in $\{j_1, \ldots, j_s\}$ such that $\delta_{j}^{j_m}=1$, $m=1, \ldots, s$. As the coframe is orthonormal and the values for $i$ and $j$ are different, then $g^{ij}=0$, for all $i, j$ and,  therefore, $f_1 \wedge_1 f_2=0$. Now, suppose that the contracted wedge product of order $l>1$ between forms, on the conditions of this proposition, is zero. Let us prove that the contracted wedge product of order $l+1$ between forms of this kind is then zero. In fact, let us take $f_1=e^{i_1 \ldots i_r}$ and $f_2=e^{j_1 \ldots j_s}$ such that their superscript indexes are different. Thus,  $f_1 \wedge_{l+1} f_2 = \sum^{n}_{i, j = 1} g^{i j} (e_{i} \rfloor f_1) \wedge_l (e_{j} \rfloor f_2)$, since $e_{i} \rfloor f_1$ is a $(r-1)$-form and $e_{j} \rfloor f_2$ a $(s-1)$-form that are spanned by distinct subsets of the coframe, then the inductive hypothesis yields $(e_{i} \rfloor f_1) \wedge_l (e_{j} \rfloor f_2)=0$ and, therefore, $f_1 \wedge_{l+1} f_2 =0$.
\end{proof}

\begin{defn}
\upshape Given forms $f_1 \in  \varGamma(U, {\bigwedge}^r (T^{*} \mathcal{M}))$ and $f_2 \in  \varGamma(U, {\bigwedge}^s (T^{*} \mathcal{M}))$, $r \leq s$, the product $\diamond$ \cite{cep} between $f_1$ and $f_2$ is defined according 
\begin{equation}
f_1 \diamond f_2 = \displaystyle{\sum^{r}_{l=0}} \dfrac{(-1)^{l(r-l)+[\frac{l}{2}]}}{l!} f_1 \wedge_l f_2, 
\end{equation}
whereas 
\begin{equation}
f_2 \diamond f_1 = (-1)^{rs} \displaystyle{\sum^{r}_{l=0}} \dfrac{(-1)^{l(r-l+1)+[\frac{l}{2}]}}{l!} f_1 \wedge_l f_2.
\end{equation}
\end{defn}

\begin{thm}\label{cliff}
$ (\varGamma(U, {\bigwedge} (T^{*} \mathcal{M})), \diamond)$ is a Clifford algebra.
\end{thm}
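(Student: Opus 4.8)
The plan is to realize $(\varGamma({\bigwedge}), \diamond)$ as a Clifford algebra by upgrading the Chevalley morphism $F$ from the remark in Section~2, which is already a fiberwise linear isomorphism of vector bundles (hence a linear isomorphism on sections), into an isomorphism of associative unital algebras onto $\mathcal{C}\ell(T^{*}\mathcal{M}, g^{*})$. Concretely, I would prove the intertwining identity
\[
F(f_1 \diamond f_2) = F(f_1)\, F(f_2),
\]
where juxtaposition denotes the Clifford product. Once this holds, $F$ is a bijective linear map carrying $\diamond$ to the Clifford product, so it is automatically an algebra isomorphism; associativity of $\diamond$ and the fact that $1_{\varGamma}$ is a two-sided unit are then inherited, through $F^{-1}$, from the corresponding properties on the Clifford side, and the identification $\dim\varGamma({\bigwedge}) = 2^{n} = \dim\mathcal{C}\ell(T^{*}_x\mathcal{M}, g_x)$ confirms that we obtain the full Clifford algebra and not a proper quotient.

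Since both $\diamond$ and the Clifford product are bilinear, it suffices to verify the intertwining identity on the coframe monomials $e^{I}$ and $e^{J}$, with $I = \{i_1, \ldots, i_r\}$ and $J = \{j_1, \ldots, j_s\}$. For an orthonormal coframe the Chevalley map sends $e^{I} = e^{i_1}\wedge\cdots\wedge e^{i_r}$ to the Clifford product $e^{i_1}\cdots e^{i_r}$ of distinct, mutually orthogonal generators, so the right-hand side of the identity is the single Clifford monomial $e^{i_1}\cdots e^{i_r}\, e^{j_1}\cdots e^{j_s}$. I would organize the computation as an induction on the cardinality $|I\cap J|$ of the overlap. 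The base case $I\cap J = \emptyset$ is immediate from Proposition~\ref{graf=ext}: all higher contracted wedge products $e^{I}\wedge_l e^{J}$ with $l\geq 1$ vanish, so $e^{I}\diamond e^{J} = e^{I}\wedge e^{J}$, and $F(e^{I}\wedge e^{J})$ is precisely the Clifford product of the disjoint (hence anticommuting) generators, matching $F(e^{I})F(e^{J})$. As a preliminary sanity check on the sign conventions, the generators satisfy $e^{i}\diamond e^{j} = e^{i}\wedge e^{j} + g^{ij}\,1_{\varGamma}$, whence $e^{i}\diamond e^{j} + e^{j}\diamond e^{i} = 2g^{ij}\,1_{\varGamma}$, which is exactly the defining Clifford relation.

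The inductive step is where the essential work, and the main obstacle, lies. When $I$ and $J$ share indices, each order-$l$ term $e^{I}\wedge_l e^{J}$ contracts $l$ pairs of repeated generators against $g^{ij}$, and one must show that the prescribed coefficients $\dfrac{(-1)^{l(r-l)+[\frac{l}{2}]}}{l!}$ reassemble exactly into the scalars $g^{ii}$ produced when a repeated generator is squared in the Clifford product, together with the reordering sign incurred by transporting that generator into position. The combinatorial heart of the matter is a careful bookkeeping of these signs: one must reconcile the grade-shift factor $(-1)^{l(r-l)}$, the reversion-type factor $(-1)^{[\frac{l}{2}]}$, and the symmetry weight $1/l!$ that compensates for the $l!$ index orderings of a fixed set of $l$ contractions, against the permutation signs generated by moving $l$ copies of a shared generator past the intervening factors of the Clifford monomial. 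I expect this reconciliation to be the crux of the proof. A feasible route is to peel off one shared index at a time, using the iterative definition $f_1 \wedge_{l} f_2 = \sum_{i,j} g^{ij}(e_i \rfloor f_1)\wedge_{l-1}(e_j \rfloor f_2)$ to lower the overlap by one and invoke the inductive hypothesis; the delicate point is then to control precisely how the coefficient $\dfrac{(-1)^{l(r-l)+[\frac{l}{2}]}}{l!}$ transforms under a single contraction $e_i \rfloor(-)$, which shifts $r$ to $r-1$ and correspondingly re-indexes the sum over $l$.
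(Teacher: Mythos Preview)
Your approach is correct in spirit but takes a substantially different and far more laborious route than the paper. The paper's proof is a single computation: it verifies directly from the definition of $\diamond$ that
\[
e^i \diamond e^i \;=\; e^i \wedge e^i + e^i \wedge_1 e^i \;=\; \sum_{j,k} g^{jk}(e_j\rfloor e^i)\wedge(e_k\rfloor e^i) \;=\; g^{ii}\,1_{\varGamma},
\]
and then declares that this Clifford relation on the coframe generators identifies $(\varGamma(\bigwedge),\diamond)$ as a Clifford algebra. No induction on $|I\cap J|$, no explicit intertwining with the Chevalley map, and no associativity argument appears.

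What your approach buys, at the cost of the combinatorial sign-tracking you correctly flag as the crux, is precisely what the paper leaves implicit: associativity of $\diamond$ and the fact that one obtains the full Clifford algebra rather than a quotient both follow automatically once $F$ is shown to intertwine the products. The paper's argument, by contrast, tacitly assumes associativity of $\diamond$ (which is not obvious from the iterated contracted-wedge definition) and appeals to the universal property only via the generator relation. So your route is more self-contained and rigorous, but also considerably heavier; the paper opts for the one-line generator check and leaves the structural background unspoken. Note, however, that your inductive step is only sketched: you identify the difficulty but do not carry out the sign reconciliation, so as written your proposal is an outline rather than a complete proof.
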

\begin{proof}
For an arbitrary element $e^i$ in a coframe $\{ e^1, \ldots, e^n\}$, the square of elements $e^i$ in relation to the product $\diamond$ is given by 
\begin{eqnarray}\label{id} \nonumber
(e^i)^2 = e^i \diamond e^i &=&\displaystyle{\sum_{l=0}^{1}} \dfrac{(-1)^{l(1-l)+ [\frac{l}{2}]}}{l!} e^i \wedge_{l} e^i \\ \nonumber
&=& e^i \wedge e^i + e^i \wedge_1 e^i \\ \nonumber 
&=& \displaystyle{\sum_{j, k=1}^{n}} g^{jk} (e_j \rfloor e^i)  \wedge (e_k \rfloor e^i) \\ 
&=&  \displaystyle{\sum_{k=1}^{n}} g^{ik} \delta^i_k =  g^{ii} \unsecex.
\end{eqnarray}
From Eq. \eqref{id}, it is possible to conclude that the exterior algebra of the forms $\varGamma(U, {\bigwedge} (T^{*} \mathcal{M}))$, equipped with the product $\diamond$, is indeed a Clifford algebra.
\end{proof}
\noindent Note that an orthonormal coframe yields $(e^i)^2 = \pm 1_{\varGamma}$.

Due to its importance this Clifford algebra presented in the Theorem \ref{cliff} will be named hereon by \textit{Graf--Clifford algebra}. This important product between the local sections of the exterior bundle has not a proper name heretofore, up to our knowledge, and for this reason the product $\diamond$ shall be referred throughout the text as the \textit{Graf product}.

 Furthermore, the elements of an orthonormal coframe $\{e^1, \ldots, e^n\}$ anti-commute among them, with respect to the Graf product for $i \neq j$:
\begin{eqnarray}
e^i \diamond e^j &=&\displaystyle{\sum_{l=0}^{1}} \dfrac{(-1)^{l(1-l)+ [\frac{l}{2}]}}{l!} e^i \wedge_{l} e^j \nonumber\\
&=& e^i \wedge e^j + e^i \wedge_1 e^j\nonumber\\
&=& e^i \wedge e^j + \displaystyle{\sum_{r, s=1}^{n}} g^{rs} (e_r \rfloor e^i)  \wedge (e_s \rfloor e^j)\nonumber\\
&=&e^i \wedge e^j + g^{ij} \unsecex = e^i \wedge e^j \nonumber\\&=& - \  e^j \diamond e^i. 
\end{eqnarray}

\begin{rem}
The presence of the tensor components $g^{ij}$ is fundamental in the contracted wedge product for the definition of the Graf product $\diamond$. In fact, taking a manifold $(\mathcal{M}, g)$ such that $\dim \mathcal{M} = 2$, in $\secextabrev$ yields $e^{12} \diamond e^{12} = - \unsecex$ when $(p, q) = (2, 0) \ \text{or} \ (0, 2)$ and $e^{12} \diamond e^{12} = \unsecex$ when $(p, q) = (1, 1)$, where $\{e^1, e^2\}$ is the local orthonormal coframe. {\textcolor{black}{If the definition of the Graf product disregarded the metric components $g^{ij}$, then the resulting product on $(e^{12}, e^{12})$ would be equal to $0$}}.
\end{rem}

\begin{prop}\label{graf=wedge}
On the conditions of Proposition \ref{graf=ext}, the Graf product coincides with the exterior product.
\end{prop}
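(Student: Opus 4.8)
The plan is to read the identity straight off the definition of the Graf product, using Proposition~\ref{graf=ext} to annihilate all but one term. With $f_1 = e^I$ an $r$-form and $f_2 = e^J$ an $s$-form, $r \leq s$, the product unfolds as
\begin{equation}
f_1 \diamond f_2 = \sum_{l=0}^{r} \frac{(-1)^{l(r-l)+[\frac{l}{2}]}}{l!}\, f_1 \wedge_l f_2.
\end{equation}
The hypothesis $I \cap J = \emptyset$ is exactly the setting of Proposition~\ref{graf=ext}, which gives $f_1 \wedge_l f_2 = 0$ for every $l \geq 1$. Hence every summand with $l \geq 1$ drops out and only the $l = 0$ contribution survives.

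First I would isolate that surviving term and confirm its scalar coefficient is trivial: at $l = 0$ the exponent $l(r-l) + [\frac{l}{2}]$ vanishes and $0! = 1$, so the prefactor equals $1$. Since $f_1 \wedge_0 f_2 = f_1 \wedge f_2$ by the very definition of the contracted wedge product, this already yields $f_1 \diamond f_2 = f_1 \wedge f_2$, the claimed identity.

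For completeness I would then treat the reversed order, where the definition reads $f_2 \diamond f_1 = (-1)^{rs} \sum_{l=0}^{r} \frac{(-1)^{l(r-l+1)+[\frac{l}{2}]}}{l!} f_1 \wedge_l f_2$. The same vanishing retains only the $l = 0$ term, whose coefficient is now $(-1)^{rs}$, so that $f_2 \diamond f_1 = (-1)^{rs}(e^I \wedge e^J) = e^J \wedge e^I = f_2 \wedge f_1$, the middle equality being the graded commutativity of $\wedge$ on forms built from disjoint index sets. Thus $\diamond$ collapses to $\wedge$ in both orderings.

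I expect no genuine obstacle here: the entire statement is a corollary of Proposition~\ref{graf=ext}, with the substantive work already done there. The only point deserving a moment's care is verifying that the sign prefactor degenerates to $1$ (respectively $(-1)^{rs}$) precisely at $l = 0$, so that no stray scalar is introduced in front of the exterior product.
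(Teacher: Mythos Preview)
Your argument is correct and matches the paper's proof essentially line for line: expand the Graf product, invoke Proposition~\ref{graf=ext} to kill all $l\geq 1$ terms, and observe that the $l=0$ coefficient is $1$. Your additional check of the reversed order $f_2 \diamond f_1$ is a small bonus the paper omits, but otherwise the two proofs are the same.
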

\begin{proof}
For a $r$-form $f_1$ and a  $s$-form $f_2$, $r\leq s$, on the conditions of Eq. \eqref{zero} yields
\begin{eqnarray}\nonumber
f_1 \diamond f_2 &=& \displaystyle{\sum^{r}_{l=0}} \dfrac{(-1)^{l(r-l)+[\frac{l}{2}]}}{l!} f_1 \wedge_l f_2 \nonumber \\
&=& f_1 \wedge f_2 + \displaystyle{\sum^{r}_{l=1}} \dfrac{(-1)^{l(r-l)+[\frac{l}{2}]}}{l!} f_1 \wedge_l f_2 \nonumber \\
&=& f_1 \wedge f_2 + 0\nonumber \\
&=& f_1 \wedge f_2.
\end{eqnarray}
\end{proof}

\textcolor{black}{Now, let us consider a $r$-form $f_1$ and a $s$-form $f_2$ such that $r \leq s$. One defines the product $\bigtriangleup$ between $f_1$ and $f_2$ as
\begin{equation}
f_1 \bigtriangleup  f_2 = \displaystyle{\sum^{r}_{l=0}} \dfrac{(-1)^{l(r-l+1)+[\frac{l}{2}]}}{l!} f_1 \diamond_l f_2, 
\end{equation}
where $\diamond_l$ is the \textit{contracted Graf product} of order $l$ between $f_1$ and $ f_2$, which is  iteratively defined by
\begin{eqnarray}
f_1 \diamond_0 f_2 &=& f_1 \diamond f_2\\
f_1 \diamond_l f_2 &=& \sum^{n}_{i, j = 1} g^{i j} (e_{i} \rfloor f_1) \diamond_{l-1} (e_{j} \rfloor f_2).
\end{eqnarray}}

\textcolor{black}{\begin{prop}
The product $\bigtriangleup$ restricted to $\secutanb$ is the wedge product.
\end{prop}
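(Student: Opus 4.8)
The plan is to exploit that every element of $\secutanb$ is a $1$-form, so we may take $r=s=1$, whereupon the defining sum for $\bigtriangleup$ truncates after the two terms $l=0$ and $l=1$. Since the exponent $l(r-l+1)+[\frac{l}{2}]$ evaluates to $0$ at $l=0$ and to $1$ at $l=1$, we get
\[
f_1 \bigtriangleup f_2 = \frac{(-1)^{0}}{0!}\, f_1 \diamond_0 f_2 + \frac{(-1)^{1}}{1!}\, f_1 \diamond_1 f_2 = f_1 \diamond f_2 - f_1 \diamond_1 f_2.
\]
Thus the statement reduces to computing these two contracted Graf products on $1$-forms and checking that their difference equals $f_1 \wedge f_2$.

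First I would compute $f_1 \diamond_0 f_2 = f_1 \diamond f_2$. Writing $f_1 = \sum_i a_i e^i$ and $f_2 = \sum_j b_j e^j$ and using the definition of $\diamond$ with $r=1$, only the $l=0$ and $l=1$ contracted wedge products survive, giving $f_1 \diamond f_2 = f_1 \wedge f_2 + f_1 \wedge_1 f_2$. Since $e_i \rfloor f_1 = a_i\,\unsecex$ and $e_j \rfloor f_2 = b_j\,\unsecex$ are $0$-forms (using $e_i \rfloor e^j = \delta^j_i \unsecex$), the order-one contracted wedge product reduces to $\sum_{i,j} g^{ij} a_i b_j\,\unsecex = g^{*}(f_1, f_2)\,\unsecex$, so that $f_1 \diamond f_2 = f_1 \wedge f_2 + g^{*}(f_1, f_2)\,\unsecex$.

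Next I would compute $f_1 \diamond_1 f_2 = \sum_{i,j} g^{ij} (e_i \rfloor f_1) \diamond (e_j \rfloor f_2)$. The key point here is that each contraction of a $1$-form produces a $0$-form, and the Graf product of two $0$-forms (the case $r=s=0$) consists of the single term $l=0$, hence coincides with ordinary scalar multiplication. Consequently $(e_i \rfloor f_1) \diamond (e_j \rfloor f_2) = a_i b_j\,\unsecex$, whence $f_1 \diamond_1 f_2 = \sum_{i,j} g^{ij} a_i b_j\,\unsecex = g^{*}(f_1, f_2)\,\unsecex$.

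Subtracting, the two scalar contributions cancel and one is left with $f_1 \bigtriangleup f_2 = f_1 \wedge f_2$, as claimed. The only delicate point — the expected main obstacle — is the correct reduction of the Graf product of $0$-forms to plain multiplication; once this and the coefficient bookkeeping are in place, everything else is a direct substitution.
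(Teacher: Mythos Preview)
Your proof is correct and follows essentially the same strategy as the paper: expand $f_1 \bigtriangleup f_2$ for $1$-forms into the $l=0$ and $l=1$ terms, compute each as $f_1\wedge f_2 + (\text{scalar})$ and $(\text{scalar})$ respectively, and observe the scalar parts cancel. The only difference is cosmetic: the paper checks this on orthonormal coframe elements with a case split $k\neq m$ versus $k=m$ (using $g^{km}=0$ in the first case), whereas you treat arbitrary $1$-forms $f_1,f_2$ at once and let the $g^{*}(f_1,f_2)\,\unsecex$ terms cancel without invoking orthonormality --- a slightly cleaner route, but the same computation underneath.
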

\begin{proof}
Let $e^k, e^m$ be elements in the coframe of $T^* \M$ for $k\neq m$, thus
\begin{eqnarray} \nonumber
e^k \bigtriangleup e^m &=& e^k \diamond e^m + (-1)^{1(2-1)} e^k \diamond_1 e^m \\ \nonumber
&=& e^k \wedge e^m - \sum^{n}_{i, j = 1} g^{i j} (e_{i} \rfloor e^k) \diamond (e_{j} \rfloor e^m) \\
&=& e^k \wedge e^m - \underbrace{g^{k m}}_{0} \delta_{k}^{k} \unsecex \wedge \delta_{m}^{m} \unsecex  = e^k \wedge e^m.
\end{eqnarray}
Furthermore, if $k=m=$ then
\begin{eqnarray} \nonumber
e^k \bigtriangleup e^k = e^k \diamond e^k - e^k \diamond_1 e^k &=& g^{k k} \unsecex - g^{k k} \delta_{k}^{k} \unsecex \wedge \delta_{k}^{k} \unsecex \\ 
&=& g^{k k} \unsecex - g^{k k} \unsecex = 0.
\end{eqnarray}
Hence, the product $\bigtriangleup$ over sections of $\tangb$, which is defined from the Graf product is actually the wedge product $\wedge$.
\end{proof}
}

\textcolor{black}{Let $e^i$ be a section of $\tangb$, since $\secMtanb$ can be included in $\secMext$ and considering the identification
\begin{equation}
\extbund \stackrel{F}{\longrightarrow} \clifbund
\end{equation}
then the mapping $F \circ e^i$ lies in $\secMclif$. This means that a coframe $\{e^1, \ldots, e^n\}$ of $\tangb$ can be included in $\secMclif$, this is, there is an identification
\begin{equation}
\secMext \longleftrightarrow \secMclif,
\end{equation}
once $\{e^1, \ldots, e^n\}$ generates $\secMext$.
Besides, if $e^i \in \secMtanb$ then the composition $F \circ e^i$ can be identified  with $e^i$, namely, $e^i$ can be included in $\secMclif$.
}

Let us consider $x \in U \subseteq \M$, thus $\{e^1(x), \ldots, e^n(x)\}$ generates the fiber $\cliffib$, a Clifford algebra, where $g_x = {g\big|}_{\tanfib}$, yielding 
\begin{equation}
(e^i (x))^2 =  g_x (e^i (x), e^i (x)).
\end{equation}
Thus it is possible to induce a product $\ast$ in $\secuclif$, which is defined as follows:
\begin{equation}
e^i \ast e^i = g (e^i, e^i),
\end{equation}
whereas
\begin{equation}
e^i (x) \bullet e^i (x) =  {g\big|}_{\tanfib} (e^i (x), e^i (x)), \ \forall x \in U,
\end{equation}
where $\bullet$ is the Clifford product in the fiber $\cliffib$. \textcolor{black}{These two products $\ast$ and $\bullet$ are essentially different, since they regard distinct structures. On the one hand, the product $\bullet$ is the Clifford product on the algebra $\cliffib$, whereas the product $\ast$ is only the generalized product defined on $\clifbund$-sections, whose  structure is not the same as of $\cliffib$. Such product $\ast$ was merely defined  to clarify that $\secuclif$ has an underlying geometric structure of a Clifford algebra,  provided by the algebraic Clifford structure of the fibers.}

\textcolor{black}{Since $\secext$ is identified with $\secuclif$, then a local coframe of $\clifbund$ can be obtained from the local coframe $\{e^{i_1}\wedge \cdots \wedge e^{i_k} \ | \ i_1< \cdots < i_k; \ k=1, \ldots, n\}$, i. e., the set $\{e^{i_1}\ast \cdots \ast e^{i_k} \ | \ i_1< \cdots < i_k; \ k=1, \ldots, n\}$ is a coframe for $\clifbund$. Therefore, $(\secuclif, \ast)$ has a structure of Clifford algebra.
}

\begin{rem}
\textcolor{black}{It is clear that both $(\secext, \diamond)$ and $(\secuclif, \ast)$ are Clifford algebras. Both these Clifford algebras do exist by virtue of the geometric structure considered. Regarding the inherent algebraic structure, the exterior algebra is in fact, not a Clifford algebra. However, it can be turned into a Clifford algebra as long as one considers the Graf product and the geometric structure of the sections of $\extbund$ and $\clifbund$.}
\end{rem}

From now on we will prove some results about the volume element. Since the coframe $\{e^1, \ldots, e^n\}$ is orthonormal, then the matrix that represents $g^*$ reads $\operatorname{diag}(g^{11}, g^{22}, \ldots, g^{nn})$.

\begin{prop}
The volume element $\textbf{v}$ squares according to 
\begin{equation}\label{volvol}
\!\!\textbf{v} \diamond \textbf{v} = \left\{ \begin{array}{ll}
+ \unsecex, &\!\!\!\! \text{if} \ p-q \equiv_4 0, 1 \Leftrightarrow p-q \equiv_8 0, 1, 4, 5 \\
- \unsecex, & \!\!\!\! \text{if} \ p-q \equiv_4 2, 3 \Leftrightarrow p-q \equiv_8 2, 3, 6, 7 
\end{array} \right..
\end{equation}
\end{prop}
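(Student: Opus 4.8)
The plan is to reduce the computation to the familiar square of a Clifford pseudoscalar, exploiting that $(\varGamma(U,{\bigwedge}(T^{*}\M)),\diamond)$ is a Clifford algebra (Theorem \ref{cliff}) generated by the orthonormal coframe $\{e^1,\dots,e^n\}$. The first step is to recognize that $\textbf{v}$ is literally the Graf product of these generators, that is, $\textbf{v}=e^1\diamond e^2\diamond\cdots\diamond e^n$. Building the product from the left, at each stage one multiplies the $k$-form $e^{1\ldots k}$ by the $1$-form $e^{k+1}$, whose index is disjoint from $\{1,\dots,k\}$; by Proposition \ref{graf=wedge} this equals the wedge $e^{1\ldots k}\wedge e^{k+1}=e^{1\ldots (k+1)}$, and iterating yields $e^1\diamond\cdots\diamond e^n=e^{12\ldots n}=\textbf{v}$. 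Associativity, guaranteed by Theorem \ref{cliff}, is precisely what legitimizes this telescoping.

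Next I would compute $\textbf{v}\diamond\widetilde{\textbf{v}}$ rather than $\textbf{v}\diamond\textbf{v}$ directly, since the reversed factor collapses cleanly. As reversion is an anti-automorphism fixing $1$-forms, $\widetilde{\textbf{v}}=e^n\diamond\cdots\diamond e^1$, whence
\[
\textbf{v}\diamond\widetilde{\textbf{v}}=(e^1\diamond\cdots\diamond e^n)\diamond(e^n\diamond\cdots\diamond e^1)=\prod_{i=1}^{n} g^{ii}\,\unsecex=(-1)^q\,\unsecex,
\]
where each adjacent pair $e^i\diamond e^i=g^{ii}\unsecex$ collapses from the inside out by Eq. \eqref{id}. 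On the other hand, reordering $e^n\diamond\cdots\diamond e^1$ back to $e^1\diamond\cdots\diamond e^n$ costs $\binom{n}{2}=\tfrac{n(n-1)}{2}$ anticommutations of distinct generators, so $\widetilde{\textbf{v}}=(-1)^{n(n-1)/2}\textbf{v}$, in agreement with the definition of reversion on an $n$-form. Combining the two gives the closed form
\[
\textbf{v}\diamond\textbf{v}=(-1)^{\frac{n(n-1)}{2}}\,\textbf{v}\diamond\widetilde{\textbf{v}}=(-1)^{\frac{n(n-1)}{2}+q}\,\unsecex,\qquad n=p+q.
\]

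The remaining, and genuinely delicate, step is purely arithmetic: to show that the parity of the exponent $\tfrac{n(n-1)}{2}+q$ is governed by $p-q$ modulo $4$, being even exactly when $p-q\equiv_4 0,1$. Writing $n=p+q$ and expanding, $\tfrac{n(n-1)}{2}+q=\tfrac{p(p-1)}{2}+pq+\tfrac{q(q+1)}{2}$. The slickest route is to note that the exponent changes by $2n+2$ (an even number) under $(p,q)\mapsto(p+1,q+1)$, which leaves $p-q$ fixed; hence its parity depends only on $p-q$, and one may reduce to the representatives $q=0$ (where the exponent is $\tfrac{p(p-1)}{2}$, even iff $p\equiv_4 0,1$) and $p=0$ (where it is $\tfrac{q(q+1)}{2}$, even iff $q\equiv_4 0,3$, i.e. $-q\equiv_4 0,1$), both matching the stated classification. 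The equivalences $p-q\equiv_4 0,1\Leftrightarrow p-q\equiv_8 0,1,4,5$ and $p-q\equiv_4 2,3\Leftrightarrow p-q\equiv_8 2,3,6,7$ are then immediate, since reduction mod $8$ refines reduction mod $4$. I expect the main obstacle to be exactly this bookkeeping of signs — lining up the reversion sign $(-1)^{n(n-1)/2}$ with the product of squares $(-1)^q$ and translating the resulting parity into the $p-q$ classification; the Graf algebra itself contributes nothing harder than the telescoping already used in Eq. \eqref{id}. As indicated in the introduction, one could alternatively route the computation through the Hodge operator, expressing $\textbf{v}\diamond\textbf{v}$ via $\star$ acting on $\unsecex$; I would keep the direct Clifford computation above as the backbone and invoke $\star$ only for a coordinate-free phrasing of the same signs.
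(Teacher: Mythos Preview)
Your proof is correct and follows a route genuinely different from the paper's. The paper computes $\textbf{v}\diamond\textbf{v}$ by direct expansion of the defining sum $\sum_{l=0}^n \tfrac{(-1)^{l(n-l)+[l/2]}}{l!}\,\textbf{v}\wedge_l\textbf{v}$, arguing that every contracted wedge product $\textbf{v}\wedge_l\textbf{v}$ with $l<n$ vanishes (each surviving summand carries an off-diagonal factor $g^{jk}$, $j\ne k$) and that $\textbf{v}\wedge_n\textbf{v}=n!\,g^{11}\cdots g^{nn}\,\unsecex$, which gives $\textbf{v}\diamond\textbf{v}=(-1)^{[n/2]+q}\,\unsecex$. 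You instead exploit the Clifford structure already secured by Theorem~\ref{cliff}: identify $\textbf{v}=e^1\diamond\cdots\diamond e^n$ via Proposition~\ref{graf=wedge}, collapse $\textbf{v}\diamond\widetilde{\textbf{v}}$ pairwise to $(-1)^q\unsecex$, and insert the reversion sign $(-1)^{n(n-1)/2}$. Since $[n/2]$ and $n(n-1)/2$ always have the same parity, the two exponents agree and both arguments land on the same $p-q\pmod 4$ classification. The paper's computation has the merit of staying entirely within the Graf formalism and providing an independent confirmation that $\diamond$ behaves like a Clifford product on the pseudoscalar; your argument is the classical Clifford-algebra one, shorter and more conceptual once Theorem~\ref{cliff} is available, and it makes immediately clear why only $p-q$ matters. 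One small caveat: Proposition~\ref{graf=wedge} is stated for $r\le s$, so when you multiply the $k$-form $e^{1\ldots k}$ by the $1$-form $e^{k+1}$ you are technically using the second formula in the definition of $\diamond$; the conclusion $e^{1\ldots k}\diamond e^{k+1}=e^{1\ldots(k+1)}$ still follows from Proposition~\ref{graf=ext}, but it is worth saying so explicitly.
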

\begin{proof}
Let us use the definition of the Graf product on $(\textbf{v}, \textbf{v})$:
\begin{eqnarray}
\textbf{v} \diamond \textbf{v}  &=& (e^1 \wedge \cdots \wedge e^n)\diamond (e^1 \wedge \cdots \wedge e^n) \nonumber\\
&=& \displaystyle{\sum_{l=0}^{n}} \dfrac{(-1)^{l(n-l)+ [\frac{l}{2}]}}{l!} \ (e^1 \wedge \cdots \wedge e^n) \wedge_{l} (e^1 \wedge \cdots \wedge e^n)\nonumber \\
&=& e^1\!\wedge\!\cdots\!\wedge\!e^n\!\wedge\!e^1\!\wedge\!\cdots\!\wedge\!e^n \!+\!(-1)^{n-1} e^{12\ldots n} \wedge_1 e^{12\ldots n}\nonumber\\
&&\!+\!\frac{1}{2} (-1)^{2(n-2)+1} e^{12\ldots n} \wedge_2 e^{12\ldots n}\!+\! \frac{1}{3!} (-1)^{3(n-3)+1} e^{12\ldots n} \wedge_3 e^{12\ldots n}\nonumber\\
&&\!+\!\cdots\!+\!\frac{1}{n!} (-1)^{[\frac{n}{2}]} e^{12\ldots n} \wedge_n e^{12\ldots n}.
\end{eqnarray}
For $l=0, 1, 2, \ldots, n-1$, the terms $g^{ii}$  are  always accompanied by a term $g^{jk}$, with $k\neq j$, thus making $\textbf{v} \wedge_l \textbf{v}$ to vanish. Nevertheless, the contracted wedge product of order $n$ on $(\textbf{v}, \textbf{v})$ is $\textbf{v} \wedge_n \textbf{v} = n! \ g^{11}g^{22} \ldots g^{nn} \ \unsecex$. Therefore
\begin{equation}
\textbf{v} \diamond \textbf{v} = (-1)^{[\frac{n}{2}]} \ g^{11}g^{22} \ldots g^{nn} \ \unsecex = (-1)^{[\frac{n}{2}]} (-1)^q \ \unsecex.
\end{equation}

Since $q+\left[\frac{n}{2}\right] \equiv_2 \left\{ \begin{array}{ll}
\frac{p-q}{2}, & \ \text{if} \ n \ \text{is even} \\
\frac{p-q-1}{2}, & \ \text{if} \ n \ \text{is odd}
\end{array} \right. $, for $p-q \equiv_4 0, 1$, it is possible obtain the even values of $\frac{p-q}{2}$ and $\frac{(p-q)-1}{2}$. For $p-q \equiv_4 2, 3$,  these fractions present odd values. Hence, it follows that
\begin{equation}\label{volvol} \nonumber
\!\!\textbf{v} \diamond \textbf{v} = (-1)^{[\frac{n}{2}]+q}\ \unsecex = \left\{ \begin{array}{ll}
+ \unsecex, &\!\!\!\! \text{if} \ p-q \equiv_4 0, 1 \Leftrightarrow p-q \equiv_8 0, 1, 4, 5 \\
- \unsecex, & \!\!\!\! \text{if} \ p-q \equiv_4 2, 3 \Leftrightarrow p-q \equiv_8 2, 3, 6, 7 
\end{array} \right..
\end{equation}
\end{proof}

The next proposition concerns about the necessary conditions for $\textbf{v}$ to be central:

\begin{prop}
If $n= \dim \M$ is odd, then $\textbf{v}$ is central on the Graf--Clifford algebra.
\end{prop}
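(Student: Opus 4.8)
The plan is to exploit that $(\secextabrev, \diamond)$ is an associative algebra generated by the orthonormal coframe $\{e^1, \ldots, e^n\}$, so that centrality of $\textbf{v}$ need only be verified against each generator. Indeed, if $\textbf{v} \diamond e^i = e^i \diamond \textbf{v}$ holds for every $i$, then for any monomial $e^{i_1} \diamond \cdots \diamond e^{i_k}$ one may push $\textbf{v}$ across one factor at a time, and by linearity $\textbf{v}$ will commute with every element of the algebra. Thus the whole problem reduces to comparing $e^i \diamond \textbf{v}$ with $\textbf{v} \diamond e^i$ for a single index $i$.

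First I would rewrite the volume element as a Graf product of the generators. Since the indices $1, \ldots, n$ are pairwise distinct, Proposition~\ref{graf=wedge} gives $\textbf{v} = e^1 \wedge \cdots \wedge e^n = e^1 \diamond \cdots \diamond e^n$, so that $\textbf{v}$ is literally the $\diamond$-product of the coframe. The two basic relations I will use are the anticommutativity $e^i \diamond e^j = - e^j \diamond e^i$ for $i \neq j$ and the square $e^i \diamond e^i = g^{ii} \unsecex$, both established right after Theorem~\ref{cliff}.

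Next I would compute both products by transporting the external factor $e^i$ to the $i$-th slot of $\textbf{v}$. Moving $e^i$ from the left through $e^1, \ldots, e^{i-1}$ costs $i-1$ transpositions, after which $e^i \diamond e^i = g^{ii}\unsecex$ collapses the pair, whence
\[
e^i \diamond \textbf{v} = (-1)^{i-1} g^{ii}\ e^1 \diamond \cdots \diamond e^{i-1} \diamond e^{i+1} \diamond \cdots \diamond e^n.
\]
Moving $e^i$ from the right through $e^n, \ldots, e^{i+1}$ instead costs $n-i$ transpositions, giving
\[
\textbf{v} \diamond e^i = (-1)^{n-i} g^{ii}\ e^1 \diamond \cdots \diamond e^{i-1} \diamond e^{i+1} \diamond \cdots \diamond e^n.
\]
The reduced monomials and the scalar $g^{ii}$ coincide in the two lines, so the expressions are equal precisely when $(-1)^{i-1} = (-1)^{n-i}$, i.e. when $n - 2i + 1$ is even, which holds for every $i$ exactly when $n$ is odd. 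This settles centrality.

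I expect the only delicate point to be the sign bookkeeping: one must check that the anticommutation relation for distinct generators propagates cleanly through the $\diamond$-monomials (which it does, by associativity together with Proposition~\ref{graf=wedge}), and that the parity count $i-1$ versus $n-i$ is carried out correctly, since it is exactly this parity that forces the hypothesis that $n$ is odd. No further manipulation of the contracted wedge products $\wedge_l$ is needed, as everything reduces to the two generator-level identities.
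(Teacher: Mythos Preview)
Your proof is correct and takes a genuinely different route from the paper's. The paper works directly with the contracted wedge products: for an arbitrary $m$-form $f$ it argues that $f \wedge_l \textbf{v} = 0$ for $l = 0,\ldots,m-1$, so that only the top term survives, yielding
\[
f \diamond \textbf{v} = \tfrac{1}{m!}(-1)^{[m/2]}\, f \wedge_m \textbf{v}, \qquad
\textbf{v} \diamond f = \tfrac{1}{m!}(-1)^{mn+m+[m/2]}\, f \wedge_m \textbf{v},
\]
and then observes that $mn+m = m(n+1)$ is even for every $m$ precisely when $n$ is odd. Your argument instead exploits that $(\secextabrev,\diamond)$ is an associative algebra generated by the coframe, reduces centrality to the single commutation $e^i \diamond \textbf{v} = \textbf{v} \diamond e^i$, and settles this by anticommutation bookkeeping at the generator level, bypassing the $\wedge_l$ machinery entirely.

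The trade-off is this: your approach is shorter, more elementary, and is the standard Clifford-algebra argument. The paper's computation, however, produces as a by-product the explicit formula $f \diamond \textbf{v} = \tfrac{1}{m!}(-1)^{[m/2]} f \wedge_m \textbf{v}$, which is immediately reused to show that the Hodge operator $\star f = f \diamond \textbf{v}$ is well defined as a map $\varGamma(\bigwedge^r) \to \varGamma(\bigwedge^{n-r})$; your proof does not supply that formula and one would have to derive it separately.
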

\begin{proof}
Given $f \in \varGamma(U, {\bigwedge}^m (T^{*}\mathcal{M}))$, the result of $f \diamond \textbf{v}$ has the term $f \wedge_i \textbf{v}$ equals zero, for $i= 0, \ldots, m-1$, when $e^1 \wedge \ldots \wedge e^n$ is contracted $k$ times, appearing all possible $(n-k)$-forms in the final expression, $1<k<m$, that vanish when multiplied with $f$ contracted $k$ times.
Therefore, in the term $f \wedge_{k} \textbf{v}$ it appears forms $e^{I}$, where $I$ has at least two repeated elements and, when it does not happen, the coefficients that follow the forms are of the type $g^{ij}=0$ for $i\neq j$. If $k>m$, then $f \wedge_k \textbf{v} = 0$, being also $f$, contracted $m$ times, a multiple of $\unsecex$. Hence,
\begin{eqnarray}\label{fvol}
f \diamond \textbf{v} &=& \displaystyle{\sum_{l=0}^{m-1}} \dfrac{(-1)^{l(m-l)+ [\frac{l}{2}]}}{l!} \ f \wedge_{l} \textbf{v} + \dfrac{(-1)^{m(m-m)+ [\frac{m}{2}]}}{m!} \ f \wedge_{m} \textbf{v}\nonumber \\
&=& \frac{1}{m!} (-1)^{[\frac{m}{2}]} f \wedge_m \textbf{v}.
\end{eqnarray}
On the other hand,
\begin{eqnarray}
\textbf{v} \diamond f &=& (-1)^{nm} \dfrac{(-1)^{m(m-m+1)+ [\frac{m}{2}]}}{m!} f \wedge_m \textbf{v} \nonumber \\
&=& \frac{1}{m!} (-1)^{mn+m+[\frac{m}{2}]} f \wedge_m \textbf{v}.
\end{eqnarray}
When $n$ is odd the values of $mn+m$ are even for all $m$, thus 
\begin{eqnarray}\label{comvol}
\textbf{v} \diamond f &=& \frac{1}{m!} (-1)^{mn+m} (-1)^{[\frac{m}{2}]} f \wedge_m \textbf{v} \nonumber \\
&=& \frac{1}{m!} (-1)^{[\frac{m}{2}]} f \wedge_m \textbf{v} \nonumber\\&=& f \diamond \textbf{v}.
\end{eqnarray} 
Hence, $\textbf{v}$ commutes with $f \in \varGamma(U, {\bigwedge}(T^{*}\mathcal{M}))$ if the dimension of $\mathcal{M}$ is odd.
\end{proof}

\begin{defn}
\upshape The \textit{Hodge operator} $\star$ can be defined in relation to the Graf product as
\begin{equation}
\begin{array}{cccc} \star : & \varGamma(U, {\bigwedge}^r(T^{*}\mathcal{M}))  & \rightarrow & \varGamma(U, {\bigwedge}^{n-r}(T^{*}\mathcal{M})) \\
& f & \mapsto &  f \diamond \textbf{v}
\end{array}.
\end{equation}
\end{defn}

This Hodge operator is well-defined, since from Eq. \eqref{fvol} holds
\begin{equation}
\star f  = \frac{1}{r!} (-1)^{[\frac{r}{2}]} f \wedge_r \textbf{v},
\end{equation}
which in turn is a $(n-r)$-form because $f \wedge_r \textbf{v}$ lies in $\varGamma(U, {\bigwedge}^{n-r}(T^{*}\mathcal{M}))$. This chosen definition is an adaptation that disregards the reversion of $f$ in the definition used in \cite{bab1}.

Observe that the next identities hold:
\begin{flalign}
&\star \unsecex = \unsecex \diamond \textbf{v} = \frac{(-1)^{0(0-0)+[\frac{0}{2}]}}{0!} \unsecex \wedge_{0} \textbf{v} = \unsecex \wedge \textbf{v} = \textbf{v},&
\end{flalign}
\begin{flalign}
&\star \textbf{v} = \textbf{v} \diamond \textbf{v} = \left\{ \begin{array}{ll}
+ \unsecex, & \ \text{if} \ p-q \equiv_4 0, 1 \Leftrightarrow p-q \equiv_8 0, 1, 4, 5 \\
- \unsecex, & \ \text{if} \ p-q \equiv_4 2, 3 \Leftrightarrow p-q \equiv_8 2, 3, 6, 7 
\end{array} \right. .&
\end{flalign}

Besides, is $f$ lies in the Graf--Clifford algebra, then the Hodge operator squared reads:
\begin{equation}
\begin{array}{lll}
\star^2(f) &=& \star (f \diamond \textbf{v})\\ 
&=& f \diamond (\textbf{v} \diamond \textbf{v}) \\
&=& (\textbf{v} \diamond \textbf{v}) \wedge f\\
&=& \left\{ \begin{array}{ll}
+ \unsecex \wedge f = f, &  \text{if} \ p-q \equiv_8 0, 1, 4, 5 \\
- \unsecex \wedge f = -f, &  \text{if} \  p-q \equiv_8 2, 3, 6, 7 
\end{array} \right. .
\end{array}
\end{equation}
Thereat, defining $\varGamma_{\star^2}^{\pm} = \{f \in \varGamma(U, {\bigwedge} (T^{*}\mathcal{M})) \ | \ \star^2 f = \pm f \}$, the Hodge operator provides another $\mathbb{Z}_2$-grading for the Graf--Clifford algebra.

Now, consider the elements $p_{\pm} := \frac{1}{2} (\unsecex \pm \textbf{v})\in\varGamma({\bigwedge}^0) \oplus \varGamma({\bigwedge}^n)$, they satisfy the properties
\begin{equation} \label{x}
p_+ + p_- = \unsecex,
\end{equation}
\begin{equation}\label{y}
\begin{array}{lll}
p_\pm \diamond p_\pm &=& \left\{ \begin{array}{ll}
\frac{1}{2} (\unsecex \pm \textbf{v}), &  \text{if} \ p-q \equiv_8 0, 1, 4, 5 \\
\pm \frac{1}{2}  \textbf{v}, &  \text{if} \  p-q \equiv_8 2, 3, 6, 7 
\end{array} \right. ,
\end{array}
\end{equation}

\begin{equation}\label{z}
\begin{array}{lll}
p_\pm \diamond p_\mp &=& \left\{ \begin{array}{ll}
0, &  \text{if} \ p-q \equiv_8 0, 1, 4, 5 \\
\frac{1}{2}  \unsecex, &  \text{if} \  p-q \equiv_8 2, 3, 6, 7 
\end{array} \right. .
\end{array}
\end{equation}
Then it follows that the right regular representation of an element $f \in \varGamma(U, {\bigwedge} (T^{*}\mathcal{M}))$ by $p_\pm$ is defined by:
\begin{equation}
R_{p_\pm} (f) = P_\pm (f) := f \diamond p_\pm = \frac{1}{2} (f \pm f \diamond \textbf{v}) = \frac{1}{2} (f \pm \star f),
\end{equation}
namely,
\begin{equation}
P_\pm = \frac{1}{2} (Id_{\varGamma\left(\bigwedge\right)} \pm \star).
\end{equation}
By Eqs. \eqref{x}, \eqref{y} and \eqref{z}, it follows that $P_+ + P_- = Id_{\varGamma\left(\bigwedge\right)}$ and $P_\pm^2 = P_\pm$, $P_\pm \circ P_\mp = 0$. 

The sets 
\begin{equation}
\varGamma_\pm := P_\pm \left(\varGamma\left(\bigwedge\right)\right) = \varGamma\left(\bigwedge\right) \diamond p_\pm
\end{equation}
are not always  subalgebras of $(\secextabrev, \diamond)$.

\begin{prop}
If $p-q \equiv_8 0, 1, 4, 5$, then $\varGamma(U, {\bigwedge} (T^{*}\mathcal{M})) = \varGamma_+ \oplus \varGamma_-$.
\end{prop}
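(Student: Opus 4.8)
The plan is to derive the decomposition purely from the three projection-operator identities recorded immediately before the statement, namely $P_+ + P_- = Id_{\varGamma\left(\bigwedge\right)}$, $P_\pm^2 = P_\pm$, and $P_\pm \circ P_\mp = 0$. It is essential to notice that these hold \emph{precisely} in the regime $p-q \equiv_8 0,1,4,5$: in that case Eqs. \eqref{y} and \eqref{z} give $p_\pm \diamond p_\pm = p_\pm$ and $p_\pm \diamond p_\mp = 0$, so $p_+$ and $p_-$ are complementary orthogonal idempotents and the induced right-regular representations $P_\pm$ inherit idempotency and orthogonality. (In the complementary regime $p-q \equiv_8 2,3,6,7$ these identities fail, since there $\textbf{v}\diamond\textbf{v} = -\unsecex$ makes $p_\pm\diamond p_\pm$ a non-idempotent multiple of $\textbf{v}$; this is exactly why the hypothesis is needed.)

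First I would establish the sum $\varGamma\left(\bigwedge\right) = \varGamma_+ + \varGamma_-$. For an arbitrary $f \in \varGamma\left(\bigwedge\right)$, the identity $f = Id_{\varGamma\left(\bigwedge\right)}(f) = (P_+ + P_-)(f) = P_+(f) + P_-(f)$ exhibits $f$ as the sum of $P_+(f) \in \varGamma_+$ and $P_-(f) \in \varGamma_-$, directly from the definition $\varGamma_\pm := P_\pm\left(\varGamma\left(\bigwedge\right)\right)$.

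Next I would show the sum is direct, that is, $\varGamma_+ \cap \varGamma_- = \{0\}$. The key auxiliary observation is that each $P_\pm$ acts as the identity on its own image and annihilates the other image: if $f = P_\pm(g) \in \varGamma_\pm$, then $P_\pm(f) = P_\pm^2(g) = P_\pm(g) = f$ and $P_\mp(f) = (P_\mp \circ P_\pm)(g) = 0$. Thus, for $f \in \varGamma_+ \cap \varGamma_-$, membership in $\varGamma_-$ gives $f = P_-(f)$, while membership in $\varGamma_+$ gives $P_-(f) = 0$; combining these forces $f = 0$. Together with the previous paragraph this yields $\varGamma\left(\bigwedge\right) = \varGamma_+ \oplus \varGamma_-$.

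There is no genuine obstacle inside the argument itself, which is the standard Peirce-type splitting induced by a pair of complementary idempotents and relies only on the three boxed identities for $P_\pm$. The single point demanding care is the one that actually consumes the hypothesis: one must verify that the idempotency $P_\pm^2 = P_\pm$ and orthogonality $P_\pm \circ P_\mp = 0$ genuinely stem from the sign $\textbf{v} \diamond \textbf{v} = +\unsecex$, which by the earlier computation holds exactly when $p-q \equiv_8 0,1,4,5$. It is this positive sign — and nothing else — that promotes $p_\pm$ from mere elements to orthogonal idempotents, so the proof should make explicit that the decomposition is an artifact of that congruence class rather than a general feature of the Graf--Clifford algebra.
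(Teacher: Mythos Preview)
Your argument is correct and rests on the same engine as the paper's: the hypothesis $p-q\equiv_8 0,1,4,5$ forces $\textbf{v}\diamond\textbf{v}=+\unsecex$, hence $\star^2=Id$, which is exactly what makes $P_\pm$ complementary orthogonal idempotents. The paper, however, packages this slightly differently. Instead of running the Peirce decomposition from $P_\pm^2=P_\pm$ and $P_\pm\circ P_\mp=0$ as you do, it uses $\star^2=Id$ to identify the images $\varGamma_\pm$ with the eigenspaces $\{f:\star f=\pm f\}$ of the involution $\star$, and then invokes the eigenspace splitting of an involution. Your route is the cleaner abstract-algebra version; the paper's route buys the additional eigenspace characterization $\varGamma_\pm=\{f:\star f=\pm f\}$, which is used further downstream (for the truncated model). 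If you want to align fully with the subsequent development, it would be worth appending that characterization as a corollary of your idempotent identities.
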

\begin{proof}
If $p-q \equiv_8 0, 1, 4, 5$, for $f_1 \in \varGamma_+$ and $f_2 \in \varGamma_-$ there exists $g, h \in \varGamma\left(\bigwedge\right)$, such that $f_1 = P_+ (h) = \frac{1}{2} (h + \star h)$, and $f_2 = P_- (g) =  \frac{1}{2} (g - \star g)$. Hence, 
\begin{eqnarray}
\star f_1 &=& \frac{1}{2} (\star h + \star^2 h)= \frac{1}{2} (\star h +  h) = f_1\\\star f_2 &=& \frac{1}{2} (\star g - \star^2 g)= \frac{1}{2} (\star g -  g) = -f_2,
\end{eqnarray}
meaning that for $p-q \equiv_8 0, 1, 4, 5$, it implies that
\begin{equation}
\varGamma_\pm = \left\{f \in \varGamma\left(\bigwedge \right ) |  \star f= \pm f \right\}
\end{equation} 
and for this reason there is a splitting $\varGamma(U, {\bigwedge} (T^{*}\mathcal{M})) = \varGamma_+ \oplus \varGamma_-$.
\end{proof}

\begin{rem}
\textcolor{black}{If $p-q \equiv_8 2, 3, 6, 7$, this splitting does not necessarily exist over $\mathbb{R}$.}
\end{rem}  

In addition, this implies that if $f \in \varGamma_\pm$ and $p-q \equiv_8 0, 1, 4, 5$, then 
\begin{equation}
P_\pm (f) = \frac{1}{2}(f \pm \star f) = \frac{1}{2} (f\pm (\pm f))=f.
\end{equation}

\begin{prop}\label{p311}
If $n$ is odd and $p-q \equiv_8 0, 1, 4, 5$, then 

\noindent i) $P_\pm$ are endomosphisms,

\noindent ii) $\varGamma_\pm$ are subalgebras of $\secextabrev$ and

\noindent iii) the units of $\varGamma_\pm$ are $p_\pm$.
\end{prop}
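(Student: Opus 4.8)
The plan is to reduce all three claims to two structural facts about the elements $p_\pm = \frac{1}{2}(\unsecex \pm \textbf{v})$: that they are central in $(\secextabrev, \diamond)$, and that they are orthogonal idempotents. First I would observe that since $n$ is odd, the preceding proposition guarantees that the volume element $\textbf{v}$ is central in the Graf--Clifford algebra, and since $p_\pm$ is an affine combination of $\unsecex$ and $\textbf{v}$, it is central as well. Next, because $p-q \equiv_8 0, 1, 4, 5$, Eqs. \eqref{y} and \eqref{z} reduce to $p_\pm \diamond p_\pm = p_\pm$ and $p_\pm \diamond p_\mp = 0$, so that $p_\pm$ are two central orthogonal idempotents whose sum is $\unsecex$. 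These are precisely the two ingredients the hypotheses are designed to supply.

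For part i), I would verify the multiplicativity of $P_\pm$ directly from $P_\pm(f) = f \diamond p_\pm$. Writing $P_\pm(f) \diamond P_\pm(g) = (f \diamond p_\pm) \diamond (g \diamond p_\pm)$ and using the centrality of $p_\pm$ to commute the inner factor $p_\pm$ past $g$, the expression collapses to $f \diamond g \diamond (p_\pm \diamond p_\pm) = (f \diamond g) \diamond p_\pm = P_\pm(f \diamond g)$, where the idempotency $p_\pm \diamond p_\pm = p_\pm$ is invoked in the penultimate step. Together with the linearity already recorded for $P_\pm$, this shows that each $P_\pm$ is an algebra endomorphism of $\secextabrev$ (sending the unit $\unsecex$ to $p_\pm$).

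For part ii), given $f_1, f_2 \in \varGamma_\pm$, I would write $f_i = a_i \diamond p_\pm$ and compute $f_1 \diamond f_2 = (a_1 \diamond p_\pm) \diamond (a_2 \diamond p_\pm)$; commuting $p_\pm$ through $a_2$ by centrality and applying $p_\pm \diamond p_\pm = p_\pm$ yields $(a_1 \diamond a_2) \diamond p_\pm \in \varGamma_\pm$. Since $\varGamma_\pm = P_\pm(\secextabrev)$ is the image of the linear projection $P_\pm$, it is already a vector subspace, so closure under $\diamond$ upgrades it to a subalgebra. For part iii), I would note first that $p_\pm = P_\pm(\unsecex) \in \varGamma_\pm$, and then for any $f = a \diamond p_\pm \in \varGamma_\pm$ compute $f \diamond p_\pm = a \diamond (p_\pm \diamond p_\pm) = a \diamond p_\pm = f$; centrality of $p_\pm$ supplies the left-sided identity $p_\pm \diamond f = f$ as well, so $p_\pm$ is the two-sided unit of $\varGamma_\pm$.

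I expect no genuine obstacle beyond bookkeeping, since the whole argument rests on the two hypotheses delivering exactly the two properties needed, with $n$ odd giving centrality of $\textbf{v}$ and $p-q \equiv_8 0,1,4,5$ giving the idempotency and orthogonality relations. The only point meriting a moment of care is that \emph{endomorphism} here must be read as a (non-unital) algebra homomorphism, since $P_\pm$ sends $\unsecex$ to $p_\pm$ rather than to $\unsecex$; once this reading is fixed, all three parts follow from the same two short computations.
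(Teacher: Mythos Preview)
Your proposal is correct and follows essentially the same approach as the paper. The paper expands $P_\pm(f_1)\diamond P_\pm(f_2)$ into four terms and then invokes centrality of $\textbf{v}$ (from $n$ odd) and $\textbf{v}\diamond\textbf{v}=\unsecex$ (from $p-q\equiv_8 0,1,4,5$) to collapse them to $P_\pm(f_1\diamond f_2)$; you package exactly the same computation more cleanly by first isolating that $p_\pm$ is a central idempotent and then writing $(f_1\diamond p_\pm)\diamond(f_2\diamond p_\pm)=f_1\diamond f_2\diamond p_\pm^{\,2}=f_1\diamond f_2\diamond p_\pm$. Your remark that ``endomorphism'' must be read non-unitally (since $P_\pm(\unsecex)=p_\pm$) is apt and matches how the paper uses the term.
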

\begin{proof}
i) and ii) 
For $f_1, f_2 \in \varGamma\left(\bigwedge\right)$, note that
\begin{eqnarray}\label{end}
\!\!\!\!\!\!\!\!\!\!\!\!\!\!P_\pm (f_1) \diamond P_\pm (f_2)\!&\!=\!&\!\frac{1}{4} (f_1 \pm f_1 \diamond \textbf{v})\diamond(f_2 \pm f_2 \diamond \textbf{v}) \nonumber\\
\!&\!=\!&\!\frac{1}{4} f_1\!\diamond\!f_2 \pm  \frac{1}{4} f_1\!\diamond\!f_2\!\diamond\!\textbf{v} \pm \frac{1}{4} f_1\!\diamond\!\textbf{v} \!\diamond\!f_2 + \frac{1}{4} f_1\!\diamond\!\textbf{v}\!\diamond\!f_2\!\diamond\!\textbf{v},
\end{eqnarray}
which is not necessarily the image of an element in $\secextabrev$ by $P_{\pm}$.

Regarding Eq. \eqref{end}, if $\textbf{v}$ is central ($n$ odd) and $p-q \equiv_8 0, 1, 4, 5$, thus $P_\pm (f_1) \diamond P_\pm (f_2) = P_\pm (f_1 \diamond f_2)$, namely, $P_\pm$ are endomorphisms of $\varGamma(U, {\bigwedge} (T^{*}\mathcal{M}))$ and under such conditions the sets $(\varGamma_\pm, \diamond)$ are closed in relation to the Graf product and therefore are subalgebras of $\varGamma(U, {\bigwedge} (T^{*}\mathcal{M}))$.

\noindent iii) A good attempt the units of $\varGamma_\pm$ is given by the following elements
\begin{eqnarray}
P_\pm (\unsecex) &=& \frac{1}{2} (Id_{\varGamma\left(\bigwedge\right)}(\unsecex) \pm \star(\unsecex)) \nonumber \\
&=& \frac{1}{2} (\unsecex \pm \unsecex \diamond \textbf{v}) = \frac{1}{2} (\unsecex \pm \textbf{v}) = p_\pm.
\end{eqnarray}
Indeed they are: for $f \in \secextabrev$, since $p-q \equiv_8 0, 1, 4, 5$ yields
\begin{eqnarray}
P_\pm (f) \diamond  p_\pm &=& \frac{1}{4} (f \pm f \diamond \textbf{v} \pm f \diamond \textbf{v} + f \diamond \textbf{v} \diamond \textbf{v}) \nonumber \\
&=& \frac{1}{2} (f \pm f \diamond \textbf{v}) = P_\pm (f).
\end{eqnarray}
and here it is important that $n$ be odd
\begin{eqnarray}
p_\pm \diamond P_\pm (f) &=& \frac{1}{4} (f \pm \textbf{v} \diamond f \pm f \diamond \textbf{v} +  \textbf{v} \diamond f  \diamond \textbf{v}) \nonumber \\
&=& \frac{1}{2} (f \pm f \diamond \textbf{v}) = P_\pm (f).
\end{eqnarray}
\end{proof}

%Clifford algebras are well known to be semisimple when $p-q \equiv_8  1, 5$ \cite{oxford}. Thus, the  $(\varGamma(U, {\bigwedge} (T^{*}\mathcal{M})), \diamond)$ semisimplicity yields $\textbf{v} \diamond \textbf{v} = \unsecex$ and $\textbf{v}$ is central, since if $n$ is even, then either $p, q$ are even or $p, q$ are odd, which implies that $p-q$ is even.

Consider the splitting
\begin{equation}
\varGamma\left(U, {\bigwedge} (T^{*}\mathcal{M})\right) = \varGamma_L \oplus \varGamma_U := \bigoplus_{k=0}^{[\frac{n}{2}]} \varGamma\left({\bigwedge}^k\right) \oplus \bigoplus_{k=[\frac{n}{2}]+1}^{n} \varGamma\left({\bigwedge}^k\right).
\end{equation}
The endomorphisms $P_\pm$ are emulated under this splitting: the {lower truncation} $P_L (f)= f_L := \sum_{k=0}^{[\frac{n}{2}]} f_{I_k} e^{I_k}$ and the {upper truncation} $P_U (f)= f_U :=\sum_{k=[\frac{n}{2}]+1}^{n} f_{I_k} e^{I_k}$. In this notation, the split spaces are $\varGamma_L = P_L (\varGamma\left(\bigwedge\right))$ and $\varGamma_U = P_U (\varGamma\left(\bigwedge\right))$ and these new endomorphisms are central, commuting, idempotents. 
%satisfy $P_L + P_U = Id_{\varGamma\left(\bigwedge\right)}, P_L \circ P_U = P_U \circ P_L = 0, P_L \circ P_L = P_L$ and $P_U \circ P_U = P_U$.

For $f \in \varGamma(U, {\bigwedge} (T^{*}\mathcal{M}))$, it is established that $\star f = \pm f$ if $p-q \equiv_8 0, 1, 4, 5$, implying that $\pm(f_L + f_U )= \pm f = \star f = \star (f_L + f_U) = \star f_L + \star f_U$. Since $f_L$ and $\star f_U$ are elements of $ \bigoplus_{k=0}^{[\frac{n}{2}]} \varGamma({\bigwedge}^k)$ and $f_U, \star f_L \in \bigoplus_{k=[\frac{n}{2}]+1}^{n} \varGamma({\bigwedge}^k)$, it  follows that
\begin{equation}
f_L = \pm \star{f_U} \ \ \text{and} \ \ f_U = \pm \star f_L.
\end{equation}
Hence,
\begin{equation}\label{f2}
f = f_L + f_U = f_L \pm \star f_L = 2 P_\pm (f_L) = 2 P_\pm (P_L(f)) =  P_\pm (2 P_L(f)).
\end{equation}
Since $\varGamma_L$ is not a subalgebra of the Graf--Clifford algebra, it is necessary to define a new product:

\begin{defn}
\upshape The \textit{truncated Graf product} is defined as follows
\begin{equation}\label{tgp}
\begin{array}{cccl} \blkdiam_\pm : & \secextabrev \times \secextabrev  & \rightarrow & \varGamma_L \subset \secextabrev\\
& (f_1, f_2) & \mapsto &  2 P_L (P_\pm (f_1) \diamond P_\pm (f_2))
\end{array}.
\end{equation}
\end{defn}

Thus the preservation problem is solved. Indeed, 
\begin{equation}\label{P+} 
P_\pm (f_1 \blkdiam_\pm f_2) = P_\pm (2 P_L (P_\pm (f_1) \diamond P_\pm (f_2))) \stackrel{\eqref{f2}}{=} P_\pm (f_1) \diamond P_\pm (f_2).
\end{equation}
\textcolor{black}{On the conditions of the Proposition \ref{p311}, the mappings $P_\pm$ are endomorphisms. Hence, Eqs. \eqref{tgp} and \eqref{P+} can be refined as follows, }
\begin{equation}\label{teste}
f_1 \blkdiam_\pm f_2 = 2 P_L (P_\pm (f_1 \diamond f_2)), \ \forall f_1, f_2 \in \secextabrev,
\end{equation} and  
\begin{equation} 
P_\pm (f_1 \blkdiam_\pm f_2) = P_\pm (f_1 \diamond f_2).
\end{equation}

\begin{rem}
Whilst the set $\varGamma_L$ is not necessarily a subalgebra of the Graf--Clifford algebra, it is a subalgebra of the \textit{Truncated algebra} $(\secextabrev, \blkdiam_\pm)$.
\end{rem}

\begin{prop}
If $n$ is odd and $p-q \equiv_8 0, 1, 4, 5$, then the unit section $1_\varGamma$ is the unit of $(\varGamma_L, \blkdiam_\pm)$.
\end{prop}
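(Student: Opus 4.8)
The plan is to verify the two one-sided unit axioms $1_\varGamma \blkdiam_\pm f = f$ and $f \blkdiam_\pm 1_\varGamma = f$ for every $f \in \varGamma_L$, relying on the refined expression \eqref{teste} for the truncated Graf product. This expression is available precisely because the hypotheses $n$ odd and $p-q \equiv_8 0, 1, 4, 5$ are exactly those of Proposition \ref{p311}, under which $P_\pm$ are endomorphisms and \eqref{teste} holds. First I would observe that $1_\varGamma \in \varGamma({\bigwedge}^0) \subseteq \varGamma_L$, since $0 \leq [\frac{n}{2}]$, so the claim is well posed.

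Because $1_\varGamma$ is the unit of the Graf--Clifford algebra, one has $1_\varGamma \diamond f = f \diamond 1_\varGamma = f$ for all $f$, so \eqref{teste} collapses both products to a single computation: $1_\varGamma \blkdiam_\pm f = 2 P_L(P_\pm(f)) = f \blkdiam_\pm 1_\varGamma$. Next I would expand $P_\pm(f) = \frac{1}{2}(f \pm \star f)$, which by linearity of $P_L$ gives $2 P_L(P_\pm(f)) = P_L(f) \pm P_L(\star f)$. Since $f \in \varGamma_L$, the lower truncation fixes it, $P_L(f) = f$.

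The crux is that $P_L(\star f) = 0$. Writing $f = \sum_{k=0}^{[\frac{n}{2}]} f_{I_k} e^{I_k}$, each homogeneous component of degree $k \leq [\frac{n}{2}]$ is sent by $\star$ into $\varGamma({\bigwedge}^{n-k})$; as $n$ is odd, $n-k \geq n - [\frac{n}{2}] = [\frac{n}{2}] + 1$, so $\star f \in \varGamma_U$ and hence $P_L(\star f) = 0$. This degree bookkeeping is the one place where the oddness of $n$ is genuinely exploited beyond guaranteeing \eqref{teste}, and it is the only delicate point in the argument. Combining the pieces yields $2 P_L(P_\pm(f)) = f \pm 0 = f$, so that $1_\varGamma \blkdiam_\pm f = f \blkdiam_\pm 1_\varGamma = f$, establishing that $1_\varGamma$ is the unit of $(\varGamma_L, \blkdiam_\pm)$. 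Everything except the $\star f \in \varGamma_U$ observation is a direct substitution into the refined formula together with the definition of $P_\pm$.
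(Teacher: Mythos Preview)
Your proof is correct and rests on the same crux as the paper's: for $f\in\varGamma_L$ and $n$ odd, the Hodge dual $\star f$ lands entirely in $\varGamma_U$, so $P_L(\star f)=0$. The difference is purely one of packaging. The paper works from the raw definition \eqref{tgp}, expands $P_\pm(f)\diamond p_\pm$ and $p_\pm\diamond P_\pm(f)$ by hand, invokes $\textbf{v}\diamond\textbf{v}=\unsecex$ to simplify, and then treats the left-unit case separately using centrality of $\textbf{v}$. You instead invoke the refined formula \eqref{teste}, which already absorbs both the endomorphism property of $P_\pm$ and the centrality of $\textbf{v}$; this collapses the left- and right-unit computations into the single expression $2P_L(P_\pm(f))$ and avoids re-deriving $\textbf{v}\diamond\textbf{v}=\unsecex$. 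Your route is therefore a bit more economical, but the underlying mechanism is identical.
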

\begin{proof}
For an arbitrary $f \in \varGamma_L$ yields
\begin{eqnarray}\label{unitLr}
f \blkdiam_\pm \unsecex &=& 2 P_L (P_\pm (f) \diamond P_\pm (\unsecex)) \nonumber \\
&=& 2 P_L (P_\pm (f) \diamond p_\pm) \nonumber \\
&=& 2 P_L (\frac{1}{4} (f \pm f \diamond \textbf{v}) \diamond (\unsecex \pm \textbf{v})) \nonumber \\
&=& \frac{1}{2} P_L (f \pm f \diamond \textbf{v} \pm f \diamond \textbf{v} + f \diamond \textbf{v} \diamond \textbf{v}) \nonumber \\
&=& \frac{1}{2} (f + f \diamond (\textbf{v} \diamond \textbf{v})) \pm P_L( f \diamond \textbf{v}) \nonumber \\
&=& \left\{ \begin{array}{ll}
f \pm P_L( f \diamond \textbf{v}),  \ \text{if} \ p-q \equiv_8 0, 1, 4, 5 \\
\pm P_L( f \diamond \textbf{v}), \   \text{if} \  p-q \equiv_8 2, 3, 6, 7 
\end{array} \right. .
\end{eqnarray}
If $n$ is odd, then $ f \diamond \textbf{v}$ is at least a $(n - [\frac{n}{2}])$-form such that $n - [\frac{n}{2}] \neq [\frac{n}{2}] $. It implies, in this case, $P_L( f \diamond \textbf{v})=0$. Hence, when $p-q \equiv_8 0, 1, 4, 5$ and $n$ is odd, it yields
\begin{equation} \label{unitL}
f \blkdiam_\pm \unsecex = f, \ \forall f \in \varGamma_L.
\end{equation}
On the other hand, if $n$ is odd then $\textbf{v}$ is central, hence 
\begin{eqnarray}\label{unitLl}
\unsecex \blkdiam_\pm f &=& 2 P_L (p_\pm \diamond P_\pm (f)) \nonumber \\
&=& 2 P_L \left(\frac{1}{4} (\unsecex \pm \textbf{v})  \diamond (f \pm f \diamond \textbf{v})\right) \nonumber \\
&=& \frac{1}{2} P_L (f \pm \textbf{v} \diamond f \pm f \diamond \textbf{v} +  \textbf{v} \diamond f  \diamond \textbf{v}) \nonumber \\
&=& \frac{1}{2} (f + \textbf{v} \diamond f  \diamond \textbf{v}) \pm \frac{1}{2} P_L( f \diamond \textbf{v} + \textbf{v} \diamond f) \nonumber \\
&=& \frac{1}{2} (f +   f  \diamond (\textbf{v} \diamond \textbf{v})) \pm  P_L( f \diamond \textbf{v}) \nonumber \\
&=& \left\{ \begin{array}{ll}
f \pm P_L( f \diamond \textbf{v}),  \ \text{if} \ p-q \equiv_8 0, 1, 4, 5 \\
\pm P_L( f \diamond \textbf{v}), \   \text{if} \  p-q \equiv_8 2, 3, 6, 7 
\end{array} \right. .
\end{eqnarray}
Then, under the conditions that regard Eq. \eqref{unitL}, it yields $\unsecex \blkdiam_\pm f=f$.
\end{proof}

\begin{prop}
If $n$ is odd and $p-q \equiv_8 0, 1, 4, 5$, then $(\varGamma_L, \blkdiam_\pm) \cong (\varGamma_\pm, \diamond)$.
\end{prop}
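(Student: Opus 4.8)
The plan is to exhibit an explicit map together with its inverse, rather than to argue bijectivity abstractly. The natural candidate is the restriction
\[
\varPhi := P_\pm\big|_{\varGamma_L} : (\varGamma_L, \blkdiam_\pm) \longrightarrow (\varGamma_\pm, \diamond),
\]
whose proposed inverse is $\varPsi := 2\,P_L\big|_{\varGamma_\pm}$. Since $\varGamma_\pm = P_\pm(\secextabrev)$ by definition, $\varPhi$ is automatically well defined as a map into $\varGamma_\pm$, and $\varPsi$ lands in $\varGamma_L = P_L(\secextabrev)$.

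First I would verify the homomorphism property. For $f_1, f_2 \in \varGamma_L \subset \secextabrev$, Eq. \eqref{P+} gives directly $\varPhi(f_1 \blkdiam_\pm f_2) = P_\pm(f_1 \blkdiam_\pm f_2) = P_\pm(f_1) \diamond P_\pm(f_2) = \varPhi(f_1) \diamond \varPhi(f_2)$. This is precisely where the hypotheses $n$ odd and $p-q \equiv_8 0,1,4,5$ are used, as they guarantee through Proposition \ref{p311} that $P_\pm$ is an endomorphism and hence that \eqref{P+} is available. The unit is preserved as well, since $\varPhi(\unsecex) = P_\pm(\unsecex) = p_\pm$, which matches the unit of $(\varGamma_\pm, \diamond)$ given by Proposition \ref{p311} iii) and the unit of $(\varGamma_L, \blkdiam_\pm)$ identified in the preceding proposition.

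Next I would check that $\varPhi$ and $\varPsi$ are mutually inverse. For $f \in \varGamma_\pm$ one has $\star f = \pm f$ under $p-q \equiv_8 0,1,4,5$, so Eq. \eqref{f2} yields $\varPhi(\varPsi(f)) = P_\pm(2 P_L(f)) = 2 P_\pm(P_L(f)) = f$. For $h \in \varGamma_L$, I would compute $\varPsi(\varPhi(h)) = 2 P_L(P_\pm(h)) = P_L(h \pm \star h) = P_L(h) \pm P_L(\star h) = h \pm P_L(\star h)$, and here the oddness of $n$ is decisive: every homogeneous component of $h$ has degree $k \leq [\frac{n}{2}]$, so $\star h$ has degree $n-k \geq n - [\frac{n}{2}] = [\frac{n}{2}]+1 > [\frac{n}{2}]$, placing $\star h$ entirely in $\varGamma_U$ and forcing $P_L(\star h) = 0$; thus $\varPsi(\varPhi(h)) = h$.

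The main obstacle is not the homomorphism property, which is essentially packaged into Eq. \eqref{P+}, but the bookkeeping that makes $\varPhi$ a bijection: one must track the asymmetric normalization, with the factor $2$ appearing on the $P_L$ side but not on the $P_\pm$ side, and, crucially, invoke $n$ odd to guarantee that the Hodge image of $\varGamma_L$ falls into $\varGamma_U$ so that $P_L \circ \star$ annihilates $\varGamma_L$. Assembling these observations, $\varPhi$ is a unital algebra isomorphism with inverse $\varPsi$, whence $(\varGamma_L, \blkdiam_\pm) \cong (\varGamma_\pm, \diamond)$.
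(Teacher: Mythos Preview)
Your proof is correct and follows essentially the same approach as the paper: both use the maps $P_\pm\big|_{\varGamma_L}$ and $2P_L\big|_{\varGamma_\pm}$ and rely on Eqs.~\eqref{P+} and \eqref{f2}. The only organizational difference is that the paper verifies both maps are homomorphisms and then checks injectivity and surjectivity of $P_\pm\big|_{\varGamma_L}$ separately, whereas you show that the two maps are mutual inverses directly (making the homomorphism check for $\varPsi$ unnecessary); your explicit use of $n$ odd to force $\star(\varGamma_L) \subset \varGamma_U$ and hence $P_L(\star h)=0$ is exactly the mechanism behind the paper's more compressed injectivity step.
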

\begin{proof}
Consider the mappings ${P_\pm \big|}_{\varGamma_L}: (\varGamma_L, \blkdiam_\pm) \rightarrow (\varGamma_\pm, \diamond)$ and ${2 P_L \big|}_{\varGamma_\pm}: (\varGamma_\pm, \diamond) \rightarrow (\varGamma_L, \blkdiam_\pm)$, which are homomorphisms of subalgebras. In fact, for arbitrary $f_1, f_2 \in \varGamma_L$
\begin{eqnarray}
{P_\pm \big|}_{\varGamma_L}(f_1 \blkdiam_\pm f_2) &=& {P_\pm \big|}_{\varGamma_L} (2 P_L (P_\pm (f_1) \diamond P_\pm (f_2))) \nonumber \\
&=& {P_\pm \big|}_{\varGamma_L} (2 P_L (P_\pm (f_1 \diamond f_2))) \nonumber \\
&=& P_\pm (f_1 \diamond f_2) \nonumber \\
&=& P_\pm (f_1) \diamond  P_\pm (f_2) \nonumber \\
&=& {P_\pm \big|}_{\varGamma_L} (f_1) \diamond {P_\pm \big|}_{\varGamma_L}(f_2).
\end{eqnarray}
and for arbitrary $f_1, f_2 \in \varGamma_\pm$
\begin{eqnarray}
{2 P_L \big|}_{\varGamma_\pm}(f_1 \diamond f_2) &=& {2 P_L \big|}_{\varGamma_\pm}(f_1 \diamond f_2) \nonumber \\
&=& 4 \left(2 P_L \left(\frac{1}{2} f_1 \diamond \frac{1}{2} f_2\right)\right) \nonumber \\
&=& 4 (2 P_L (P_\pm(P_L(f_1) \diamond P_\pm(P_L(f_2)))) \nonumber \\
&=& 4 (2 P_L (P_\pm({P_L \big|}_{\varGamma_\pm}(f_1) \diamond P_\pm({P_L \big|}_{\varGamma_\pm}(f_2)))) \nonumber \\
&=& 4 {P_L \big|}_{\varGamma_\pm}(f_1) \blkdiam_\pm {P_L \big|}_{\varGamma_\pm}(f_2) \nonumber \\
&=& {2 P_L \big|}_{\varGamma_\pm}(f_1) \blkdiam_\pm {2 P_L \big|}_{\varGamma_\pm}(f_2).
\end{eqnarray}

The homomorphism ${P_\pm \big|}_{\varGamma_L}$ is injective, given $f_1, f_2 \in \varGamma_L$, if ${P_\pm \big|}_{\varGamma_L}(f_1)= {P_\pm \big|}_{\varGamma_L}(f_2)$, then $P_\pm (P_L(f_1)) = P_\pm (P_L(f_2)) \Rightarrow \frac{1}{2} f_1 = \frac{1}{2} f_2 \Rightarrow f_1=f_2$. In addition, this mapping is surjective: given $f \in \varGamma_\pm$, the element $2{P_L \big|}_{\varGamma_\pm} (f) \in \varGamma_L$ is such that $f= P_\pm (2 P_L(f)) = {P_\pm \big|}_{\varGamma_L}(2{P_L \big|}_{\varGamma_\pm} (f))$, which implies that $\operatorname{Im} {P_\pm \big|}_{\varGamma_L} = \varGamma_\pm$. Thus, ${P_\pm \big|}_{\varGamma_L}$ is an isomorphism of subalgebras. \end{proof}

Therefore if $n$ is odd and $p-q \equiv_8 0, 1, 4, 5$ the truncated subalgebra $(\varGamma_L, \blkdiam_\pm)$ is isomorphic to $(\varGamma_\pm, \diamond)$, such structures will be very useful to find new pinor and spinor classes.
% ------------------------------------------------------------------------

\section{Conclusions}

The main goal here has been to introduce the Graf product that endows a Clifford algebra, emulating the Riezs' construction of Clifford algebras, the K\"ahler-Atiyah algebra, into a Clifford and exterior bundle contexts. 
The volume element centrality is discussed in  $(\varGamma(U, {\bigwedge} (T^{*}\mathcal{M})), \diamond)$. The Hodge operator has fundamental importance in the developed setup, since it splits the space $\varGamma(U, {\bigwedge} (T^{*}\mathcal{M}))$ and 
the truncated structures can be, \textcolor{black}{then}, defined. Hence, the truncated Graf product has been introduced. Besides a thorough formalism that excels the K\"ahler-Atiyah algebra, this framework is a very useful one to pave the 
 real pinor bundles on $(\M,g)$, defined as a vector bundle equipped with a morphism of bundles of algebras   that play the role of a bundle of modules over the K\"ahler-Atiyah bundle, namely, the exterior bundle endowed with the geometric product emulated by the Graf one. This is a fundamental framework that pave the way to define pinor and spinor bundles, whose paramount structural importance and huge spectrum of applications has been studied in Ref. \cite{bab1,bab2}, 
where new classes of spinor fields have been derived in several dimensions and signatures in Refs. \cite{Bonora:2014dfa,Bonora:2015ppa,deBrito:2016qzl,Ablamowicz:2014rpa}. We are going to employ the setup here developed to studied further classes of spinor fields in manifolds of signature (9,0), being beyond of the scope of this work.

\subsection*{Acknowledgements}{RL thanks to CAPES and RdR is grateful to CNPq (grant No. 303293/2015-2) and to FAPESP (grant No. 2017/18897-8), for partial financial support.}

% ------------------------------------------------------------------------
\end{document}